\newtheorem{theorem}{Theorem}[section]
\newtheorem{lemma}{Lemma}[section]
\newtheorem{proposition}{Proposition}[section]
\newtheorem{corollary}{Corollary}[section]
\newtheorem{definition}{Definition}[section]
\newtheorem{proof}{Proof}[section]
\title{Clifford wavelet transform and the associated Donoho-Stark’s uncertainty Principle}
\author[1,2]{Sabrine Arfaoui}
\affil[1]{Laboratory of Algebra, Number Theory and Nonlinear Analysis,\newline Department of Mathematics, Faculty of Sciences, University of Monastir,\newline Avenue of the Environment, 5019 Monastir, Tunisia.}
\affil[2]{Department of Mathematics, Faculty of Science, University of Tabuk,\newline King Faisal Road, 47512 Tabuk, Saudi Arabia.}
\begin{document}
\maketitle
\begin{abstract}
This paper focuses on studying the Donoho-Stark's type uncertainty principle for the continuous Clifford wavelet transform. A brief review of Clifford algebra/analysis, Clifford wavelet transform and their properties is conducted. Next, such concepts are applied to develop an uncertainty principle based on Clifford wavelets.\\
\textbf{Mathematics Subject Classification.} 42B10, 44A15, 30G35, 15A66, 42C40.\\
\textbf{Keywords.}  Clifford analysis, Clifford Fourier transform, Clifford wavelet transform, Uncertainty principle, Donoho–Stark’s uncertainty principle.\\

\end{abstract}
\maketitle


\section{Introduction} 
In 1927 Heisenberg announced the famous principle of uncertainty, it is one of the most famous and important concepts of quantum mechanics. The physical origin of uncertainty principle is related to quantum systems and states that we cannot know both the position and speed of a particle with perfect accuracy, the more we nail down the particle's position, the less we know about its speed and vice versa. 

Mathematically, an uncertainty principle (UP) is an inequality expressing limitations on the simultaneous concentration of a function and its Fourier transform. According to the mathematical meaning given to the intuitive notion of concentration and to the type of representation chosen for the signal, many different forms of UPs are possible  and, starting from the classical works of Heisenberg, a vast literature is today available on these topics, see \cite{Banouh},  \cite{Dahkleteal}, \cite{ElHaouietal}, \cite{Fu2015}, \cite{Hitzer2}, \cite{Hitzer-Mawardi-1}, \cite{Jday2018}, \cite{Kouetal}, \cite{Mawardi-Ryuichi}, \cite{Mawardi-Ryuichi-1}, \cite{Rachdi-Herch} \cite{Mawardi-Ryuichi-2}, \cite{Mawardi-Hitzer-1}, \cite{Mawardi-Hitzer-2}, \cite{Mawardi-Hitzer-3}, \cite{Mawardi-Hitzer-4}, \cite{Mawardi-Hitzer-Hayashi-Ashino}, \cite{Msehli-Rachdi-1}, \cite{Msehli-Rachdi-2}, \cite{Rachdi-Meherzi}, \cite{Mawardi-Hitzer-Hayashi-Ashino} \cite{Rachdi-Amri-Hammami}, \cite{Rachdi-Herch}, \cite{Soltani}, \cite{Yangetal1}, \cite{DeBie1}, \cite{DeBie2}. 

The qualitative UP is a kind of UPs, which tells us how a signal $f$ and its Fourier transform $\hat{f}$, behave under certain conditions. One such example can be Donoho–Stark’s UP , which expresses the limitations on the simultaneous concentration of $f$, and  $\hat{f}$. 

In this paper we are concerned with the Donoho-Stark form of the uncertainty principle. We propose more precisely to extend the Donoho-Stark uncertainty principle to the context of Clifford algebras by applying the so-called Clifford wavelets. Clifford wavelets or wavelets on Clifford algebras are the last variants of wavelets. The flexibility of Clifford algebras permitted to involve different forms of vector analysis in the same time. For more details on Clifford wavelet transformations, we refer to \cite{Arfaoui1, Arfaoui2, Arfaoui3, Arfaoui4, Arfaoui-Rezgui-book, Arfaouietal21BookWavelet}.

As Donoho-Stark UP relies on the concept of $\epsilon$-concentration of a function on a measurable set $U \subseteq \mathbb{R}^m$. We start by recalling this definition followed by the statement of the classical theorem.
\begin{definition}
Given $\epsilon_U\geq 0$, a function $f$ is  said to be $\epsilon_U$-concentrated in the $L^{2}$-norm on a measurable set $U\subseteq \mathbb{R}^m$ if
\begin{equation}
 ||f(x)||_{L^2(\mathbb{R}^m| U)}  \leq \epsilon_U ||f||_2.
\end{equation}
\end{definition}
\begin{theorem}
(UP Donoho-Stark) Suppose that $f\in L^2(\mathbb{R}^m), f\neq  0$ is $\epsilon_T$-concentrated on $T \subseteq \mathbb{R}^m$ and $\hat{f}$ (Fourier transform of $f$) is $\epsilon_\Omega$-concentrated on $\Omega \subseteq \mathbb{R}^m$, with $T, \Omega$ two measurable sets in $\mathbb{R}^m$ and $\epsilon_T, \epsilon_\Omega > 0, \quad\epsilon_T+\epsilon_\Omega < 1$. Then
\begin{equation}
|T||\Omega| > (1-\epsilon_T-\epsilon_\Omega)^2.
\end{equation}
\end{theorem}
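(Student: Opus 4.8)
The plan is to recast the two concentration hypotheses as statements about a pair of orthogonal projections and then to estimate the norm of their composition from below and from above. First I would introduce the time-limiting operator $P_T f = \chi_T f$ (multiplication by the indicator of $T$) and the band-limiting operator $Q_\Omega f = \mathcal{F}^{-1}(\chi_\Omega \hat f)$. Both are orthogonal projections on $L^2(\mathbb{R}^m)$, hence contractions. The assumption that $f$ is $\epsilon_T$-concentrated on $T$ reads exactly $\|f - P_T f\|_2 \le \epsilon_T \|f\|_2$, while the assumption on $\hat f$, combined with the Plancherel identity $\|\hat f\|_2 = \|f\|_2$, gives $\|f - Q_\Omega f\|_2 = \|\hat f - \chi_\Omega \hat f\|_2 \le \epsilon_\Omega \|f\|_2$.

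The key intermediate step is to control how well $P_T Q_\Omega f$ approximates $f$. Using the triangle inequality, the identity $P_T f - P_T Q_\Omega f = P_T(f - Q_\Omega f)$, and the fact that $P_T$ is a contraction, one obtains
\begin{equation}
\|f - P_T Q_\Omega f\|_2 \le \|f - P_T f\|_2 + \|P_T(f - Q_\Omega f)\|_2 \le \epsilon_T \|f\|_2 + \|f - Q_\Omega f\|_2 \le (\epsilon_T + \epsilon_\Omega)\|f\|_2 .
\end{equation}
A reverse triangle inequality then yields the lower bound $\|P_T Q_\Omega f\|_2 \ge (1 - \epsilon_T - \epsilon_\Omega)\|f\|_2$, whose right-hand side is positive precisely because $\epsilon_T + \epsilon_\Omega < 1$.

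To produce a matching upper bound I would compute the operator norm of $P_T Q_\Omega$ via its Hilbert--Schmidt norm. Writing the composition as an integral operator, its kernel is $\chi_T(x)\,K(x,y)$ with $K(x,y)=\int_\Omega e^{2\pi i (x-y)\cdot \xi}\, d\xi$; viewing $K(x,\cdot)$ as the Fourier transform of $\chi_\Omega(\xi)e^{2\pi i x\cdot\xi}$ and applying Plancherel in the $y$ variable for fixed $x$ gives $\int_{\mathbb{R}^m}|K(x,y)|^2\,dy = |\Omega|$, so that $\|P_T Q_\Omega\|_{\mathrm{HS}}^2 = \int_T |\Omega|\, dx = |T|\,|\Omega|$. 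Since the operator norm is dominated by the Hilbert--Schmidt norm, $\|P_T Q_\Omega f\|_2 \le |T|^{1/2}|\Omega|^{1/2}\|f\|_2$. Combining this with the lower bound and dividing by $\|f\|_2 > 0$ gives $(1 - \epsilon_T - \epsilon_\Omega) \le |T|^{1/2}|\Omega|^{1/2}$, and squaring the non-negative quantities yields the desired estimate.

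The one genuinely delicate point is the strict inequality, since the chain above only delivers $|T|\,|\Omega| \ge (1 - \epsilon_T - \epsilon_\Omega)^2$. To upgrade $\ge$ to $>$ I would argue that the operator norm of $P_T Q_\Omega$ is \emph{strictly} smaller than its Hilbert--Schmidt norm: equality of the two norms would force the operator to have rank at most one, whereas a time--frequency localization operator onto sets of finite positive measure has infinitely many nonzero singular values. I expect this strictness argument, together with making the kernel computation and the Hilbert--Schmidt bound fully rigorous in the chosen Fourier normalization, to be the main technical obstacle.
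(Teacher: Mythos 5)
First, a point of order: the paper never proves this statement. It is quoted in the introduction as the classical Donoho--Stark theorem (the citation being Donoho--Stark, 1989), and the paper's own proofs concern only the Clifford-wavelet analogue in Section 4, which reuses time- and frequency-limiting operators $P_T$, $Q_\Omega$ and the same triangle-inequality skeleton, but concludes with an estimate on $\|T_\psi(f)\|_2$ and never passes through Hilbert--Schmidt norms. So your proposal can only be compared with the standard literature argument, and indeed it \emph{is} that argument: concentration rephrased via the two projections, the chain $\|f - P_TQ_\Omega f\|_2 \le \|f-P_Tf\|_2 + \|P_T(f-Q_\Omega f)\|_2 \le (\epsilon_T+\epsilon_\Omega)\|f\|_2$, the reverse triangle inequality giving $\|P_TQ_\Omega\|_{op} \ge 1-\epsilon_T-\epsilon_\Omega$, and the kernel computation $\|P_TQ_\Omega\|_{HS}^2 = |T|\,|\Omega|$ with $\|\cdot\|_{op} \le \|\cdot\|_{HS}$. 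All of these steps are correct in the $e^{2\pi i x\cdot\xi}$ normalization, and they yield $|T|\,|\Omega| \ge (1-\epsilon_T-\epsilon_\Omega)^2$.

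Two gaps remain, one minor and one that you flagged but did not close. The minor one: you implicitly assume $0<|T|,|\Omega|<\infty$. You should first dispose of the edge cases: if $|T|=0$ then $P_Tf=0$ and $\epsilon_T$-concentration gives $\|f\|_2\le\epsilon_T\|f\|_2$, forcing $f=0$, a contradiction (same for $\Omega$ via Plancherel); and if either measure is infinite the claimed inequality is trivial. The substantive gap is the strict inequality, which is genuinely part of the burden here because the paper states the theorem with $>$ (Donoho and Stark's original version has $\ge$). Your reduction is right -- equality of operator and Hilbert--Schmidt norms for a nonzero compact operator forces rank at most one -- but the fact you then invoke, that the localization operator has \emph{infinitely many} nonzero singular values, is left unproven and is much stronger than needed. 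What you need is only that $P_TQ_\Omega$ cannot have rank $\le 1$, and that has a short self-contained proof: a rank-one factorization $\chi_T(x)\,G(x-y)=u(x)\overline{v(y)}$ of the kernel, where $G=\mathcal{F}^{-1}\chi_\Omega$, would give, for almost every pair of distinct points $x_1,x_2\in T$ at which $u\neq 0$, the identity $G(z)=c\,G\bigl(z+x_2-x_1\bigr)$ for a.e.\ $z$, hence after Fourier transform $\chi_\Omega(\xi)\bigl(1-c\,e^{2\pi i(x_2-x_1)\cdot\xi}\bigr)=0$ a.e.; since a nonconstant character equals a fixed constant only on a Lebesgue-null set, this forces $|\Omega|=0$, a contradiction, so the set $\{u\neq 0\}\cap T$ is null, the kernel vanishes a.e., and $\|P_TQ_\Omega\|_{HS}^2=|T|\,|\Omega|=0$, again a contradiction. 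Inserting this lemma (together with the routine Fubini justification of the kernel formula, which you acknowledged) completes your proof.
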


This paper is organized as follows. Section 2 is devoted to a reminder of the basics of Clifford algebras/ analysis. In Section 3, we recall some results and properties for the Clifford-wavelet transform useful. In section 4, we prove the  Donoho-Stark’s uncertainty principle for the  Clifford wavelet transform. Finally, we give a conclusion in Section 5.

\section{Clifford Analysis Revisited} 
In this section a mathematical review of Clifford algebra/analysis is discussed. We introduce the real Clifford algebra $\mathbb{R}_{m}$ over $R^m, (m\geq1)$ as a non commutative algebra with dimension $2^{m}$ generated by the basis $(e_1,\dots, e_m)$ and satisfying:
$$
\begin{cases}
e_j^2=-1,\quad j=1,\dots,m\\
e_je_k+e_ke_j=0,\quad j\neq k,\quad j,k=1,\dots,m.
\end{cases}
$$
Denote next, for $k\in\mathbb{N}$, $\mathbb{R}_{m}^{k}$ the space of $k$-multi vectors defined by
$$
\mathbb{R}_{m}^{k}=span_{\mathbb{R}}\left\{e_{A}=e_{i_{1}i_{2}\dots i_{k}},\,A=(i_1,i_2,\dots,i_k),\;1\leq i_{1}<\cdots<i_{k}\leq m\right\}
$$
where $e_{i_{1}i_{2}\dots i_{k}}=e_{i_{1}}e_{i_{2}}\cdots e_{i_{k}}$ and $e_{\emptyset}=1$. 

The Clifford algebra $\mathbb{R}_{m}$ may be decomposed as a direct sum
$$
\mathbb{R}_{m}=\bigoplus_{k=0}^{m}\mathbb{R}_{m}^{k}.
$$

Any element $u$ of the Clifford algebra $\mathbb{R}_{m}$, is called multivector, and can be expressed in the form 
$$
u=\sum_{A}u_{A}e_{A}.
$$
Let $|A|$ is the length of the multi-index $A$, $a$ may be written as
$$
u=\sum_{k=0}^{n}\sum_{\left|A\right|=k}u_{A}e_{A},\;\;u_{A}\in\mathbb{R}.
$$

The conjugation defined on the basis $(e_A)_{A}$ on the  Clifford algebra $\mathbb{R}_{m}$ is  
$$
\overline{e_{A}}=(-1)^{\frac{|A|(|A|+1)}{2}}e_{A}, \,\forall A.
$$

It is a non commutative operation for which we have $\overline{uv}=\overline{v}\overline{u},\, \forall u,v\in\mathbb{R}_{m}$.

These concepts of real Clifford algebras may be extended to the complex Clifford algebra $\mathbb{C}_{m}=\mathbb{R}_{m}+i\mathbb{R}_{m}$. In this case any element $\lambda\in\mathbb{C}_{m}$ may be decomposed as $$
\lambda=u+iv=\displaystyle\sum_{A}\lambda_{A}e_{A},\,\lambda_{A}\in\mathbb{C},\, u,v\in\mathbb{R}_m
$$
and where the basis $(e_j)_{1\leq j\leq m}$ is this way an orthonormal (canonical) basis of $\mathbb{C}_m$. This induces an involution on $\mathbb{C}_m$ known as the hermitian conjugation $$
\lambda^{\dagger}=\overline{a}-i\overline{b}.
$$

A vector $x=(x_{1},x_{2},\dots,x_{m})\in\mathbb{R}^{m}$ may be identified to the Clifford element in $\mathbb{R}_{m}$, 
$$
\underline{x}={\displaystyle \sum_{j=1}^{m}x_{j}e_{j}}.
$$
We define the Clifford product of two vectors by
$$
\underline{x}\underline{y}=\underline{x}\bullet\underline{y}+\underline{x}\wedge\underline{y},
$$
where the $\bullet$ product is an analogous of the classical inner product on $\mathbb{R}^{m}$,
$$
\underline{x}\bullet\underline{y}=-<\underline{x},\underline{y}>=-{\displaystyle \sum_{j=1}^{m}x_{j}y_{j}},
$$
and where the $\wedge$ product is the outer product
$$
\underline{x}\wedge\underline{y}={\displaystyle\sum_{j<k}e_{j}e_{k}(x_{j}y_{k}-x_{k}y_{j})}.
$$
This yields that
$$
\underline{x}\bullet\underline{y}=\frac{1}{2}(\underline{x}\underline{y}+\underline{y}\underline{x})\;\;\hbox{and}\;\;
\underline{x}\wedge\underline{y}=\frac{1}{2}(\underline{x}\underline{y}-\underline{y}\underline{x}).
$$
In particular we have
$$
\underline{x}^{2}=-\left|\underline{x}\right|^{2}=-\sum_{j=1}^{m}|x_{j}|^{2}.
$$

Now let $\underline{\omega}$ be a vector in $\mathbb{R}^m$ with norm $|\underline{\omega}| = 1$, i.e. $\underline{\omega}$ is an element of the unit sphere $\mathcal{S}^{n-1}\subset \mathbb{R}^m$. Any vector $\underline{x}$  may then be decomposed as a sum of two terms, one being parallel with $\underline{\omega}$ and the other being perpendicular to $\underline{\omega}$:
$$
\underline{x}=\underline{x}_{\parallel\underline{\omega}}+\underline{x}_{\perp\underline{\omega}}= \left\langle \underline{x},\underline{\omega}\right\rangle \underline{\omega} + \underline{\omega}\left(\underline{x}\wedge\underline{\omega}\right)
$$
which permits next to characterize the reflection $R_{\underline{\omega}}$
with respect to the hyperplane $\underline{\omega}^{\perp}$ as 
$$
R_{\underline{\omega}}(\underline{x})=\underline{\omega}\underline{x}\underline{\omega}.
$$

Cartan-Dieudonn\'e Theorem (\cite{Cartan1966}) relates the reflection
to the so-called spinors. Stating that there exists $\underline{\omega}_{1},\underline{\omega}_{2},\dots,\underline{\omega}_{2l}\in\mathcal{S}^{n-1}$ with $\underline{\omega}_{j}^{2}=-1,1\leq j\leq2l$ ($l\in\mathbb{N}$) and a rotation $T\in SO(n)$ ($SO(n)$ being the rotation group on $\mathbb{R}^n$ ) such that
\[
T(\underline{x})=\left[R_{\underline{\omega}_{1}}\circ R_{\underline{\omega}_{2}}\circ......\circ R_{\underline{\omega}_{2l}}\right](\underline{x})=\underline{\omega}_{1}\underline{\omega}_{2},......\underline{\omega}_{2l}\underline{x}\,\underline{\omega}_{2l}\underline{\omega}_{2l-1},......\underline{\omega}_{2}\underline{\omega}_{1}.
\]
Denoting $s=\underline{\omega}_{1}\underline{\omega}_{2},......\underline{\omega}_{2l}$
and $\overline{s}=\underline{\omega}_{2l}\underline{\omega}_{2l-1},......\underline{\omega}_{2}\underline{\omega}_{1}$ we have $T(\underline{x})=s\underline{x}\overline{s}$. The element $s$ is called a spinor. Generally speaking, the spin group of order $n$ is
\[
Spin(n)=\left\{ s\in\mathbb{R}_{n};\;s={\displaystyle \prod\limits _{j=1}^{2l}}\underline{\omega}_{j},\;\underline{\omega}_{j}^{2}=-1,1\leq j\leq2l\right\}.
\]

In Clifford functional framework, a function $f$ defined on the vector space $\mathbb{R}^{m}$ and taking values in the Clifford algebra  $\mathbb{R}_m$ or  $\mathbb{C}_m$  will be expressed as
\begin{equation}\label{eq:Clifford valued func}
f(\underline{x})=\sum_{A}e_{A}f_{A}(\underline{x}),
\end{equation}
where $f_{A}$ are real-valued functions and $A\subset\left\{ 1,2,\cdots,n\right\} $.
Its conjugate $\overline{f}$ is given by
\begin{equation}
 \overline{f}(\underline{x})=\sum_{A}\overline{e_{A}}f_{A}(\underline{x})   
\end{equation}
for a function with values in the “real” Clifford algebra $\mathbb{R}_{m}$. However, the conjugate will be
\begin{equation}
f^{\dagger}(\underline{x})=\sum_{A}e_{A}^{\dagger}f_{A}(\underline{x})^{\dagger}
\end{equation}
for a function with values in the complex Clifford algebra  $\mathbb{C}_{m}$.

 We denote the inner product of functions $f$ and $g$ in the framework of Clifford analysis by

\begin{equation}\label{eq:inner product}
\left\langle f,g\right\rangle_{L^{2}(\mathbb{R}^{m},\mathbb{R}_{m},dV(\underline{x}))} =\int_{\mathbb{R}^{m}}f(\underline{x})^{\dagger}g(\underline{x})dV(\underline{x}).
\end{equation}
and the associated norm by
\begin{equation}
\|f\|_{L^{2}(\mathbb{R}_{m},dV(\underline{x}))} =<f,f>_{L^{2}(\mathbb{R}_{m},dV(\underline{x}))}^{\frac{1}{2}}.
\end{equation}

For more backgrounds on the Clifford algebra/ analysis  the readers may be referred
also to ( \cite{Hamilton1866}, \cite{Cartan1966}, \cite{Brackx-Schepper-Sommen0}.)

\section{Clifford wavelet transform}
Recently, it has become popular to generalize the integral transforms from real and complex numbers to Clifford algebra to study higher dimension such as the Clifford Fourier transform CFT and the Clifford wavelet transform CWT. 

Wavelets are functions that satisfy certain requirements and are used in representing and processing functions and signals, as well as, in
compression of data and images as in fields such as: mathematics, physics, computer science, engineering, and medicine. The study of wavelet transforms had been motivated by the need to overcome some weak points in representing functions and signals by the classical Fourier transforms such as the speed of convergence and Gibbs phenomenon. In addition, wavelet transforms have showed superiority over the classical Fourier transforms. In many applications, wavelet transforms converge faster than Fourier transforms, leading to more efficient processing of signals and data. 

Clifford wavelets or wavelets on Clifford algebras are the last variants of wavelet functions developed by researchers in order to overcome many problems that are not well investigated by classical transforms. The challenging in such concepts is not the wavelet functions themselves but the structure of Clifford algebras and their flexibility to include different forms of vector analysis in the same time.

In the present section we propose to review briefly the two main methods to construct Clifford wavelets. The first one is based on Spin groups and thus includes the factor of rotations in the wavelet analysis provided with the translation and dilatation factors. This method generalizes in some sense the first essay in developing multidimensional wavelets such as Cauchy ones. The second one is concerned with wavelets issued from monogenic functions mainly polynomials. These ones constitute an extension of orthogonal polynomials to the case of Clifford algebras. Recall that orthogonal polynomials are widely applied in wavelet theory on Euclidean spaces. Extensions to the case of Clifford framework are essentially developed by Arfaoui et al \cite{Arfaoui1,Arfaoui2,Arfaoui3,Arfaoui4,Arfaoui-Rezgui-book,Arfaouietal21BookWavelet} and Brackx et al \cite{Brackxetal2013,Brackx2001a}.
\subsection{Clifford wavelets based on the Spin group}
As in the classical cases of wavelets  on Euclidean spaces we seek some properties to be satisfied for a Clifford algebra valued function  to be a mother wavelet.
\begin{definition}Let $\psi\in L^{1}\cap L^{2}(\mathbb{R}^{m},\mathbb{R}_{m},dV(\underline{x}))$. The function $\psi$ is said to be a Clifford mother wavelet  iff the following assertions hold simultaneously.
\begin{itemize}
\item $\widehat{\psi}(\underline{\xi})\left[\widehat{\psi}(\underline{\xi})\right]^{\dagger}$ is scalar.
\item The admissibility  condition
\begin{equation}
\mathcal{A}_{\psi}={\displaystyle (2\pi)^{n}\int_{\mathbb{R}^{m}}\frac{\widehat{\psi}(\underline{\xi})\left[\widehat{\psi}(\underline{\xi})\right]^{\dagger}}{|\underline{\xi}|^{n}}dV(\underline{\xi})<\infty.}
\end{equation}
\end{itemize}
\end{definition}
We say that the function $\psi$ is said to be admissible  and $\mathcal{A}_{\psi}$ its admissibility constant. We notice here also that being admissible  as a mother wavelet  the function $\psi$ should satisfy some oscillation property such as
\begin{equation}
\widehat{\psi}(\underline{0})=0\Longleftrightarrow
\int_{\mathbb{R}^{m}}\psi(\underline{x})dV(\underline{x}).
\end{equation}

For $(a,\underline{b},s)\in\mathbb{R}^{+}\times\mathbb{R}^{m}\times Spin(n)$, we denote
\begin{equation}
\psi^{a,\underline{b},s}(\underline{x})=\frac{1}{a^{\frac{n}{2}}}s\psi(\frac{\overline{s}(\underline{x}-\underline{b})s}{a})\overline{s}.
\end{equation}
It is straightforward that whenever $\psi$ is admissible, the copies $\psi^{a,\underline{b},s}$ are also admissible  and satisfy
\begin{equation}
\mathcal{A}_{\psi^{a,\underline{b},s}}=\frac{a^{n/2}}{(2\pi)^{n}}\mathcal{A}_{\psi}<\infty.
\end{equation}

\begin{proposition}
	The set $\Lambda_{\psi}=\left\{\psi^{a,\underline{b},s}:a>0,\underline{b}\in\mathbb{R}^{m},\,s\in Spin(n)\right\}$ is dense in $L^{2}(\mathbb{R}^{m},\mathbb{R}_{m},dV(\underline{x}))$.
\end{proposition}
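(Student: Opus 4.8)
The plan is to use the standard Hilbert-space criterion: a subset of $L^{2}(\mathbb{R}^{m},\mathbb{R}_{m},dV(\underline{x}))$ has dense linear span if and only if its orthogonal complement is trivial. Accordingly, I would fix $f\in L^{2}(\mathbb{R}^{m},\mathbb{R}_{m},dV(\underline{x}))$ and assume that $f$ is orthogonal to every copy $\psi^{a,\underline{b},s}$, that is
\begin{equation}
\langle f,\psi^{a,\underline{b},s}\rangle_{L^{2}(\mathbb{R}^{m},\mathbb{R}_{m},dV(\underline{x}))}=0
\qquad\text{for all } a>0,\ \underline{b}\in\mathbb{R}^{m},\ s\in Spin(n),
\end{equation}
and then show that this forces $f=0$. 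This quantity is precisely the Clifford wavelet transform of $f$ evaluated at the parameters $(a,\underline{b},s)$, so the assumption says that the wavelet transform of $f$ vanishes identically.

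The next step is to pass to the frequency side. Using the Parseval/Plancherel identity for the Clifford Fourier transform together with the transformation rule of $\widehat{\,\cdot\,}$ under the translation by $\underline{b}$, the dilation by $a$ and the spinor action $s(\cdot)\overline{s}$, I would rewrite the inner product as an integral over $\underline{\xi}$ of $\widehat{f}(\underline{\xi})$ against $\widehat{\psi^{a,\underline{b},s}}(\underline{\xi})$. The crucial feature is that the dependence on $\underline{b}$ enters only through the Fourier kernel, so for fixed $a$ and $s$ the map $\underline{b}\mapsto\langle f,\psi^{a,\underline{b},s}\rangle$ is, up to constants and conjugations, an inverse Clifford Fourier transform of a product of the form $\widehat{f}(\underline{\xi})\,[\,\widehat{\psi}(a\overline{s}\underline{\xi}s)\,]^{\dagger}$. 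Its identical vanishing in $\underline{b}$ therefore forces that product to vanish for almost every $\underline{\xi}$, for every fixed $a$ and $s$.

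Finally, I would remove the wavelet factor by integrating the resulting pointwise identity against $|\underline{\xi}|^{-n}\,da\,d\mu(s)$ over $(0,\infty)\times Spin(n)$ and invoking the admissibility condition $\mathcal{A}_{\psi}<\infty$: since the averaged integrand reconstructs the strictly positive scalar $\widehat{\psi}(\underline{\xi})[\widehat{\psi}(\underline{\xi})]^{\dagger}$ over the relevant frequency range, the vanishing of the product propagates to $\widehat{f}(\underline{\xi})=0$ for a.e. $\underline{\xi}$, and hence $f=0$ by injectivity of the Clifford Fourier transform. This yields $\Lambda_{\psi}^{\perp}=\{0\}$ and the claimed density.

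I expect the main obstacle to be bookkeeping the non-commutativity: the spinor conjugations $s(\cdot)\overline{s}$ act simultaneously on the argument and on the values of $\widehat{\psi}$, so one must verify that the various products recombine into the scalar admissibility integrand rather than a genuinely multivector-valued quantity, and one must carefully justify the interchange of the $\underline{x}$-, $\underline{b}$-, $a$- and $s$-integrations (Fubini) and the use of Clifford Fourier inversion in this non-commutative setting.
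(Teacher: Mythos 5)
The paper offers no proof of this proposition at all --- it is stated bare, and the text moves directly on to the definition of the transform --- so there is nothing internal to compare your argument against except the surrounding results. Judged on its own, your duality strategy (show the orthogonal complement of $\Lambda_{\psi}$ is trivial, via the Clifford Fourier transform) is the standard one and is essentially sound. Note, however, that the paper already contains a much shorter route: orthogonality of $f$ to every copy $\psi^{a,\underline{b},s}$ says exactly that $T_{\psi}[f]\equiv 0$, and the Parseval--Plancherel identity of Proposition 3.2 (itself stated without proof, but whose standard derivation is Fourier-analytic and does not depend on density) then gives $\mathcal{A}_{\psi}\|f\|_{2}^{2}=0$, hence $f=0$, in one line. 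Your Fourier-side computation is in effect a re-derivation of that isometry in the special case needed here.

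Two details in your sketch deserve correction or emphasis. First, the statement as literally written cannot hold: every copy satisfies $\|\psi^{a,\underline{b},s}\|_{2}=\|\psi\|_{2}$, so $\Lambda_{\psi}$ lies on a sphere in $L^{2}$ and is certainly not dense \emph{as a set}; what your orthocomplement criterion proves (and what must be meant) is that the family is total, i.e.\ that its linear span --- more precisely its right Clifford-module span, since the paper's inner product is Clifford-algebra valued and the Hilbert-space projection theorem must be applied to the scalar part of that pairing --- is dense. Second, in your final step the conclusion $\widehat{f}=0$ is not really driven by the admissibility condition $\mathcal{A}_{\psi}<\infty$. What is needed is (i) the first condition of Definition 3.1, that $\widehat{\psi}\,[\widehat{\psi}]^{\dagger}$ is scalar: this is what lets you left-multiply the vanishing product $s\,[\widehat{\psi}(a\overline{s}\underline{\xi}s)]^{\dagger}\,\overline{s}\,\widehat{f}(\underline{\xi})=0$ by $s\,\widehat{\psi}(a\overline{s}\underline{\xi}s)\,\overline{s}$ and collapse it, using $s\overline{s}=1$, to the scalar identity $|\widehat{\psi}(a\overline{s}\underline{\xi}s)|^{2}\,\widehat{f}(\underline{\xi})=0$ despite the zero divisors of the Clifford algebra; and (ii) the fact that $\psi\neq 0$ together with transitivity of the dilation--rotation action $(a,s)\mapsto a\,\overline{s}\underline{\xi}s$ on $\mathbb{R}^{m}\setminus\{0\}$, which makes the averaged scalar factor strictly positive for every $\underline{\xi}\neq 0$ (plus the Fubini bookkeeping of null sets that you already flag). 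With these repairs your outline goes through, and it correctly identifies scalarity of $\widehat{\psi}\,[\widehat{\psi}]^{\dagger}$ as the device that defuses non-commutativity.
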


\begin{definition}\label{eq:CWT formula}
	The Continuous Clifford wavelet transform  of a function $f$ in $L^{2}(\mathbb{R}^{m},\mathbb{R}_{m},dV(\underline{x}))$  is
\begin{equation}
	T_{\psi}\left[f\right](a,\underline{b},s) =<\psi^{a,\underline{b},s},f>_{L^{2}(\mathbb{R}^{m},\mathbb{R}_{m},dV(\underline{x}))}=\int_{\mathbb{R}^{m}}\left[\psi^{a,\underline{b},s}(\underline{x})\right]^{\dagger}f(\underline{x})dV(\underline{x}).
\end{equation}
\end{definition}

We recall the Parseval-Plancherel type rules which are the most important formula in wavelet theory as they permit to reconstruct functions from their wavelet transforms.

We firstly introduce an inner product relative to the Continuous Clifford wavelet transform. Let
$$
\mathcal{H}_{\psi}=\left\{ T_{\psi}\left[f\right],\;f\in L^{2}(\mathbb{R}^{m},\mathbb{R}_{m},dV(\underline{x}))\right\}
$$
be the image of $L^{2}(\mathbb{R}^{m},\mathbb{R}_{m},dV(\underline{x}))$ relatively to the operator  $T_{\psi}$. The inner product is given by
\begin{equation}
\left[T_{\psi}\left[f\right],T_{\psi}\left[g\right]\right]=\frac{1}{\mathcal{A}_{\psi}}\int\limits _{Spin(n)}\int\limits _{\mathbb{R}^{m}}\int\limits _{\mathbb{R}^{+}}(T_{\psi}\left[f\right](a,\underline{b},s))^{\dagger}T_{\psi}\left[g\right](a,\underline{b},s)\frac{da}{a^{n+1}}dV(\underline{b})ds,
\end{equation}
where $ds$ stands for the Haar measure on $Spin(n)$.
\begin{proposition}\label{prop:CWT isometry}
\begin{enumerate}
\item $T_{\psi}:L^{2}(\mathbb{R}^{m},\mathbb{R}_{m},dV(\underline{x}))\longrightarrow\mathcal{H}_{\psi}$ is an isometry.
 \item	The operator 
$$
T_{\psi}:L^{2}(\mathbb{R}^{m},\mathbb{R}_{m},dV(\underline{x}))\longrightarrow L_{2}(\mathbb{R}_{+}\times\mathbb{R}^{m}\times Spin(n),\dfrac{1}{\mathcal{A}_{\psi}}\dfrac{dadV(\underline{b})ds}{a^{n+1}})
$$
is an isometry.
\item The Parseval-Plancherel equality
\begin{equation}
	\int\limits _{Spin(n)}\int\limits _{\mathbb{R}^{m}}\int\limits _{\mathbb{R}^{+}}(T_{\psi}\left[f\right](a,\underline{b},s))^{2}\frac{da}{a^{n+1}}dV(\underline{b})ds=\mathcal{A}_{\psi}\left\Vert f\right\Vert _{2}^{2}.
\end{equation}
\item The Clifford wavelet  reconstruction formula. For all $f\in L^{2}(\mathbb{R}^{m},\mathbb{R}_{m},dV(\underline{x}))$ we have
\begin{equation}
f(\underline{x})=\frac{1}{A_{\psi}}\int\limits_{Spin(n)}\int\limits_{\mathbb{R}^{m}}\int\limits_{\mathbb{R}^{+}}\psi^{a,\underline{b},s}(\underline{x})T_{\psi}\left[f\right](a,\underline{b},s)\frac{da}{a^{n+1}}dV(\underline{b})ds
\end{equation}
in $L^{2}\left(\mathbb{R}^{m},\mathbb{R}_{m},dV(\underline{x})\right).$
\end{enumerate}
\end{proposition}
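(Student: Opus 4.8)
The plan is to derive all four assertions from the single master identity in part (3), the Parseval--Plancherel equality, since (1) and (2) are direct reformulations of it and (4) follows by a duality argument. I would therefore concentrate the work on proving the polarized (sesquilinear) form
$$\int_{Spin(n)}\int_{\mathbb{R}^m}\int_{\mathbb{R}^+}(T_{\psi}[f](a,\underline{b},s))^{\dagger}T_{\psi}[g](a,\underline{b},s)\,\frac{da}{a^{n+1}}\,dV(\underline{b})\,ds=\mathcal{A}_{\psi}\langle f,g\rangle,$$
the stated equality being the special case $g=f$.

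First I would observe that, for fixed $a$ and $s$, the map $\underline{b}\mapsto T_{\psi}[f](a,\underline{b},s)$ is a Clifford convolution in the translation variable. Computing its Clifford Fourier transform in $\underline{b}$ and invoking the Plancherel theorem for the CFT (a standard tool of the Clifford analysis background), I would rewrite the inner $\underline{b}$-integral as an integral over the frequency variable $\underline{\xi}$ involving $\widehat{f}$, $\widehat{g}$, and the dilated--rotated symbol $\widehat{\psi^{a,\cdot,s}}$. The key auxiliary computation here is the explicit behaviour of $\widehat{\psi^{a,\underline{b},s}}$ under the scaling $a$, the translation $\underline{b}$, and the spinor action $s$; once this is in hand, the $\underline{b}$-integral produces a factor of the form $\widehat{\psi}(\,\cdot\,)[\widehat{\psi}(\,\cdot\,)]^{\dagger}$ evaluated at the appropriately scaled and rotated frequency.

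Next I would carry out the remaining integrations over $a\in\mathbb{R}^{+}$ and $s\in Spin(n)$. After the change of variables $\underline{\eta}=a\,\overline{s}\,\underline{\xi}\,s$, the dependence on $s$ and $a$ is absorbed, and the accumulated factor becomes exactly
$$(2\pi)^{n}\int_{\mathbb{R}^m}\frac{\widehat{\psi}(\underline{\eta})[\widehat{\psi}(\underline{\eta})]^{\dagger}}{|\underline{\eta}|^{n}}\,dV(\underline{\eta})=\mathcal{A}_{\psi},$$
which pulls out in front of $\langle f,g\rangle$. The point at which the hypotheses must be used carefully is precisely here: the assumption that $\widehat{\psi}(\underline{\xi})[\widehat{\psi}(\underline{\xi})]^{\dagger}$ is \emph{scalar} is what allows this Clifford-valued factor to commute past $\widehat{f}^{\dagger}$ and $\widehat{g}$ and be extracted as a genuine scalar multiple, while the rotation invariance of $dV$ and the bi-invariance of the Haar measure $ds$ on $Spin(n)$ guarantee that the change of variables does not distort the measures.

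With the equality in hand, assertions (1) and (2) are immediate: taking $g=f$ shows that $T_{\psi}$ preserves the $L^{2}$-norm into the weighted space $L_{2}(\mathbb{R}_{+}\times\mathbb{R}^m\times Spin(n),\tfrac{1}{\mathcal{A}_{\psi}}\tfrac{da\,dV(\underline{b})\,ds}{a^{n+1}})$, hence is an isometry onto its image $\mathcal{H}_{\psi}$. For the reconstruction formula (4) I would argue weakly: denoting by $\widetilde{f}$ the right-hand side of the claimed identity, I would pair $\widetilde{f}$ with an arbitrary $g\in L^{2}$, interchange the order of integration (justified by the admissibility-driven integrability), and recognize the resulting expression as $\tfrac{1}{\mathcal{A}_{\psi}}$ times the sesquilinear form computed above, so that $\langle \widetilde{f},g\rangle=\langle f,g\rangle$; since $g$ is arbitrary this forces $\widetilde{f}=f$ in $L^{2}$. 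I expect the main obstacle to be the bookkeeping of the non-commutative spinor conjugations in the Fourier-domain computation of the second paragraph --- keeping the factors in the correct order so that the scalar property of $\widehat{\psi}[\widehat{\psi}]^{\dagger}$ can be brought to bear --- rather than any of the measure-theoretic steps.
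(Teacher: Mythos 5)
The paper itself contains no proof of this proposition: it is stated in the review Section~3 as quoted background, with the reader pointed to the literature (Brackx--Sommen, Arfaoui et al., Banouh et al.), so there is no in-paper argument to compare yours against. Judged on its own merits, your outline is the standard proof of the Clifford wavelet Parseval--Plancherel theorem found in exactly those references, and it is sound: (i) for fixed $(a,s)$ the transform is a correlation in $\underline{b}$, so the Clifford--Fourier Plancherel theorem converts the $\underline{b}$-integral into a frequency integral whose middle factor is $s\,\widehat{\psi}(a\overline{s}\underline{\xi}s)\,[\widehat{\psi}(a\overline{s}\underline{\xi}s)]^{\dagger}\,\overline{s}$; (ii) the hypothesis that $\widehat{\psi}\,[\widehat{\psi}]^{\dagger}$ is scalar, together with $s\overline{s}=1$, lets this factor commute past $\widehat{f}^{\dagger}$ and $\widehat{g}$ and pull out; (iii) the $(a,s)$-integration then reproduces $\mathcal{A}_{\psi}$, after which $g=f$ gives (1)--(3) and your duality pairing gives (4). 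The one step you should not leave as a gesture is (iii): the ``change of variables'' $\underline{\eta}=a\,\overline{s}\,\underline{\xi}\,s$ is not literally a change of variables on $\mathbb{R}^{+}\times Spin(n)$, but a polar-coordinates argument --- for fixed $\underline{\xi}$ the pushforward of the Haar measure under $s\mapsto\overline{s}\,\underline{\xi}\,s/|\underline{\xi}|$ is the uniform measure on the sphere $S^{m-1}$ (via the double cover $Spin(n)\to SO(n)$ and transitivity), while $da/a=d|\underline{\eta}|/|\underline{\eta}|$, so that $\int_{Spin(n)}\int_{0}^{\infty}(\cdot)\,\frac{da}{a}\,ds$ becomes $\int_{\mathbb{R}^{m}}(\cdot)\,\frac{dV(\underline{\eta})}{|\underline{\eta}|^{m}}$ up to a normalization constant, independently of $\underline{\xi}$; this independence is precisely why a single constant $\mathcal{A}_{\psi}$ factors out, and it deserves to be written down. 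Finally, note that the paper's item (3) writes $(T_{\psi}[f](a,\underline{b},s))^{2}$, which for a Clifford-algebra-valued transform only makes sense read as $(T_{\psi}[f])^{\dagger}\,T_{\psi}[f]$; your polarized $\dagger$-form is the correct interpretation of the statement being proved.
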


\subsection{Monogenic polynomials based Clifford wavelets}
In this section we propose to review a second method to construct wavelets  on Clifford algebras. The idea is based on the so-called monogenic polynomial  which constitute an extension of orthogonal polynomials on Clifford algebras such qs Gegenbauer polynomials  known also as ultra-spheroidal polynomials. Other classes may be developed by the readers by similar techniques. Furthermore, we may refer to \cite{Arfaoui1,Arfaoui2,Arfaoui3, Arfaoui4, Arfaoui-Rezgui-book, Brackx-Schepper-Sommen0,BrackxChisholmSoucek} for other existing classes of polynomials and associated wavelets  in both the classical context and the Clifford one.

Recently a generalized class of Clifford-Gegenbauer polynomials  and wavelets  has been developed in \cite{Arfaoui3} based on the 2-parameters Clifford-weight  function
\begin{equation}
\omega_{\alpha,\beta}(\underline{x})=(1-|\underline{x}|^2)^\alpha (1+|\underline{x}|^2)^\beta.
\end{equation}

Denote $Z_{\ell,m}^{\alpha,\beta}(\underline{x})$ such polynomials and the CK-extension  $F^*$ expressed by
$$
\begin{array}{lll}
F^*(t,\underline{x})&=&
\displaystyle\sum\limits_{\ell=0}^{\infty}\dfrac{t^\ell}{\ell!}Z_{\ell,m}^{\alpha,\beta}(\underline{x})\,\omega_{\alpha-\ell,\beta-\ell}(\underline{x}).
\end{array}
$$
From the monogenicity  relation $(\partial_t+\partial_{\underline{x}})F^*(t,\underline{x})=0$ the authors proved the following result.
\begin{proposition} The 2-parameters Clifford-Gegenbauer Polynomials  $Z_{\ell,m}^{\alpha,\beta}$ satisfy 
\begin{enumerate}
\item the recurrence relation
$$
\begin{array}{lll}
Z_{\ell+1,m}^{\alpha,\beta}(\underline{x})
&=&[2(\alpha-\ell)\underline{x}(1-\underline{x}^2)-2(\beta-\ell)\underline{x}\,(1+\underline{x}^2)] Z_{\ell,m}^{\alpha,\beta}(\underline{x})\\
&&-\,\omega_{1,1}(\underline{x})\partial_{\underline{x}} (Z_{\ell,m}^{\alpha,\beta}(\underline{x})).
\end{array}
$$
\item the Rodriguez formula 
	$$
	Z_{\ell,m}^{\alpha,\beta}(\underline{x})=(-1)^\ell \,\omega_{\ell-\alpha,\ell-\beta}(\underline{x}) \,
	\partial_{\underline{x}}^{\ell} [(1+\underline{x}^2)^\alpha (1-\underline{x}^2)^\beta].
	$$
\end{enumerate}

\end{proposition}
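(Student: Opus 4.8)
The plan is to extract both assertions from the single monogenicity identity $(\partial_t+\partial_{\underline{x}})F^*=0$ by expanding it as a power series in $t$ and matching coefficients. Write $G_\ell(\underline{x})=Z_{\ell,m}^{\alpha,\beta}(\underline{x})\,\omega_{\alpha-\ell,\beta-\ell}(\underline{x})$ for the $\ell$-th term of $F^*$. Differentiating in $t$ shifts the index, giving $\partial_t F^*=\sum_{\ell\ge 0}\frac{t^\ell}{\ell!}G_{\ell+1}$, while $\partial_{\underline{x}}F^*=\sum_{\ell\ge 0}\frac{t^\ell}{\ell!}\partial_{\underline{x}}G_\ell$. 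Since the monomials $t^\ell/\ell!$ are linearly independent, the vanishing of the total sum forces the fundamental relation
$$G_{\ell+1}=-\partial_{\underline{x}}G_\ell,\qquad\text{i.e.}\qquad Z_{\ell+1,m}^{\alpha,\beta}\,\omega_{\alpha-\ell-1,\beta-\ell-1}=-\partial_{\underline{x}}\bigl(Z_{\ell,m}^{\alpha,\beta}\,\omega_{\alpha-\ell,\beta-\ell}\bigr)$$
for every $\ell\ge 0$. Both parts of the proposition will be read off from this one identity.

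For the recurrence (part 1) I would apply the Leibniz rule for the Dirac operator $\partial_{\underline{x}}=\sum_j e_j\partial_{x_j}$ to the product of the scalar radial weight and the Clifford-valued polynomial. Because $\omega_{\alpha-\ell,\beta-\ell}$ depends only on $|\underline{x}|^2$ and $\partial_{x_j}|\underline{x}|^2=2x_j$, the weight contributes a single vector factor, $\partial_{\underline{x}}(Z\,\omega)=(\partial_{\underline{x}}Z)\,\omega+2\,\omega'(|\underline{x}|^2)\,\underline{x}\,Z$, where the scalar $\omega'$ commutes through. Differentiating $\omega_{\alpha-\ell,\beta-\ell}=(1-|\underline{x}|^2)^{\alpha-\ell}(1+|\underline{x}|^2)^{\beta-\ell}$ and using $1+|\underline{x}|^2=1-\underline{x}^2$ and $1-|\underline{x}|^2=1+\underline{x}^2$ produces a common factor $\omega_{\alpha-\ell-1,\beta-\ell-1}$ together with the bracket $-(\alpha-\ell)(1-\underline{x}^2)+(\beta-\ell)(1+\underline{x}^2)$; note also that $\omega_{\alpha-\ell,\beta-\ell}=\omega_{\alpha-\ell-1,\beta-\ell-1}\,\omega_{1,1}$. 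Factoring this lowest weight out of both sides of the fundamental relation and cancelling it yields
$$Z_{\ell+1,m}^{\alpha,\beta}=\bigl[2(\alpha-\ell)(1-\underline{x}^2)-2(\beta-\ell)(1+\underline{x}^2)\bigr]\underline{x}\,Z_{\ell,m}^{\alpha,\beta}-\omega_{1,1}(\underline{x})\,\partial_{\underline{x}}Z_{\ell,m}^{\alpha,\beta},$$
which is the claimed formula once one observes that $\underline{x}^2=-|\underline{x}|^2$ is scalar, so $(1\mp\underline{x}^2)\underline{x}=\underline{x}(1\mp\underline{x}^2)$ and the factors may be reordered freely.

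For the Rodriguez formula (part 2) I would iterate the fundamental relation $G_{\ell+1}=-\partial_{\underline{x}}G_\ell$ from the base term. Reading off the $\ell=0$ term of the defining series, the CK-extension normalization gives $G_0=\omega_{\alpha,\beta}$, i.e. $Z_{0,m}^{\alpha,\beta}=1$, and since $\omega_{\alpha,\beta}=(1+\underline{x}^2)^\alpha(1-\underline{x}^2)^\beta$, a trivial induction yields $G_\ell=(-1)^\ell\partial_{\underline{x}}^\ell G_0$. Dividing by the weight and using $\omega_{\alpha-\ell,\beta-\ell}^{-1}=\omega_{\ell-\alpha,\ell-\beta}$ gives
$$Z_{\ell,m}^{\alpha,\beta}=(-1)^\ell\,\omega_{\ell-\alpha,\ell-\beta}(\underline{x})\,\partial_{\underline{x}}^\ell\bigl[(1+\underline{x}^2)^\alpha(1-\underline{x}^2)^\beta\bigr],$$
as required.

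The routine but delicate part — and the one place where sign or ordering errors can creep in — is the Leibniz computation for $\partial_{\underline{x}}$ on the scalar-times-Clifford product: one must track the noncommutative ordering of $\underline{x}$ and $Z_{\ell,m}^{\alpha,\beta}$ and convert cleanly between the radial variable $|\underline{x}|^2$ and the Clifford square $\underline{x}^2$. Once the observation that $\underline{x}^2$ is scalar is in hand, the ordering ambiguities disappear and the remaining coefficient-matching is purely algebraic.
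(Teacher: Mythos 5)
Your proposal is correct and takes essentially the same approach as the paper: the paper asserts that the proposition follows from the monogenicity relation $(\partial_t+\partial_{\underline{x}})F^*(t,\underline{x})=0$ of the CK-extension (deferring details to \cite{Arfaoui3}), and your argument --- matching coefficients of $t^\ell/\ell!$ to obtain $G_{\ell+1}=-\partial_{\underline{x}}G_\ell$, then applying the Leibniz rule for the Dirac operator to get the recurrence, and iterating from $Z_{0,m}^{\alpha,\beta}=1$ to get the Rodriguez formula --- is precisely that derivation carried out in full. Your sign and ordering bookkeeping (in particular using that $\underline{x}^2=-|\underline{x}|^2$ is scalar, so the weight factors commute) is consistent with the stated formulas, so there is nothing to correct.
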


\begin{definition} The generalized 2-parameters Clifford-Gegenbauer mother wavelet is defined by
\begin{equation}
\psi_{\ell,m}^{\alpha,\beta}(\underline{x})
	=Z_{\ell,m}^{\alpha+\ell,\beta+\ell}(\underline{x})
	\omega_{\alpha,\beta}(\underline{x})
	=(-1)^\ell\partial_{\underline{x}}^{(\ell)}\omega_{\alpha+\ell,\beta+\ell}(\underline{x}).
\end{equation}
\end{definition}
Furthermore, the wavelet  $\psi_{\ell,m}^{\alpha,\beta}(\underline{x})$ have vanishing moments as is shown in the next proposition.
\begin{proposition} 
	\begin{enumerate}
		\item Whenever $0<k<-m-\ell-2(\alpha+\beta) $ and $k<\ell$ we have
\begin{equation}
		\displaystyle\int_{\mathbb{R}^m} \underline{x}^k \psi_{\ell,m}^{\alpha,\beta}(\underline{x}) dV(\underline{x})=0.
\end{equation}
\item The Clifford-Fourier transform of $\psi_{\ell,m}^{\alpha,\beta}$ takes the form
\begin{equation}		\widehat{\psi_{\ell,m}^{\alpha,\beta}(\underline{u})}=(-i)^\ell\,\underline{\xi}^\ell(2\pi)^{\frac{m}{2}}\rho^{1-\frac{m}{2}+\ell}\,\displaystyle\int_{0}^\infty\widetilde{\omega}_{\alpha,\beta}^l(r)\,J_{\frac{m}{2}-1}(r\rho)dr,
\end{equation}
where $\widetilde{\omega}_{\alpha,\beta}^l(r)=((1-r^2)\varepsilon_r)^{\alpha+\ell} (1+r^2)^{\beta+\ell} r^{\frac{m}{2}}
$ with $\varepsilon_r=\mbox{sign}(1-r)$ and $J_{\frac{m}{2}-1}$  is the Bessel function of the first kind of order $\frac{m}{2}-1$.
	\end{enumerate}
\end{proposition}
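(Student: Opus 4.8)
The plan is to exploit the key representation of the wavelet as an iterated Dirac derivative of the weight, namely $\psi_{\ell,m}^{\alpha,\beta}(\underline{x})=(-1)^{\ell}\partial_{\underline{x}}^{(\ell)}\omega_{\alpha+\ell,\beta+\ell}(\underline{x})$, which reduces both assertions to manipulations of the radial weight $\omega_{\alpha+\ell,\beta+\ell}$ together with the intertwining behaviour of the Dirac operator $\partial_{\underline{x}}$.

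For the vanishing moments I would substitute this representation into the integral, obtaining $\int_{\mathbb{R}^m}\underline{x}^k\psi_{\ell,m}^{\alpha,\beta}(\underline{x})\,dV(\underline{x})=(-1)^{\ell}\int_{\mathbb{R}^m}\underline{x}^k\,\partial_{\underline{x}}^{(\ell)}\omega_{\alpha+\ell,\beta+\ell}(\underline{x})\,dV(\underline{x})$, and then integrate by parts $\ell$ times, transferring each copy of $\partial_{\underline{x}}$ from the weight onto the monomial $\underline{x}^{k}$. Under the growth restriction $k<-m-\ell-2(\alpha+\beta)$ the products $\underline{x}^{k}\,\partial_{\underline{x}}^{(j)}\omega_{\alpha+\ell,\beta+\ell}$ decay fast enough at infinity that every surface term produced by the integrations by parts vanishes, while the exponent $\alpha+\ell$ is large enough to guarantee the requisite regularity across the sphere $|\underline{x}|=1$. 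After the $\ell$-fold integration by parts the integrand contains $\partial_{\underline{x}}^{(\ell)}(\underline{x}^{k})$; since $\partial_{\underline{x}}$ lowers the polynomial degree by one and $k<\ell$, applying it $\ell$ times annihilates the degree-$k$ polynomial, so $\partial_{\underline{x}}^{(\ell)}(\underline{x}^{k})=0$ and the integral vanishes. The only genuinely delicate point here is the simultaneous control of the two inequalities, ensuring at once convergence and the disappearance of all boundary contributions.

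For the Clifford-Fourier transform I would again start from $\psi_{\ell,m}^{\alpha,\beta}=(-1)^{\ell}\partial_{\underline{x}}^{(\ell)}\omega_{\alpha+\ell,\beta+\ell}$ and apply the transform. Using the intertwining rule that the Clifford-Fourier transform sends the Dirac operator to Clifford multiplication by $i\underline{\xi}$, i.e. $\widehat{\partial_{\underline{x}}f}(\underline{\xi})=i\,\underline{\xi}\,\widehat{f}(\underline{\xi})$, the $\ell$ derivatives produce the prefactor $(-1)^{\ell}(i\underline{\xi})^{\ell}=(-i)^{\ell}\underline{\xi}^{\ell}$, which accounts for the leading factor in the claimed formula. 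It then remains to compute $\widehat{\omega_{\alpha+\ell,\beta+\ell}}$. Since this weight is radial, its transform follows from the classical Hankel representation of the Fourier transform of a radial function on $\mathbb{R}^{m}$, which introduces the Bessel kernel $J_{\frac{m}{2}-1}(r\rho)$, the constant $(2\pi)^{\frac{m}{2}}$, a power $\rho^{1-\frac{m}{2}}$, and the radial factor $r^{\frac{m}{2}}$ inside the integral. The factor $(1-r^{2})^{\alpha+\ell}$ is only defined with a sign once $r>1$, which is exactly why one records it as $\bigl((1-r^{2})\varepsilon_{r}\bigr)^{\alpha+\ell}$ with $\varepsilon_{r}=\operatorname{sign}(1-r)$, producing the modified radial profile $\widetilde{\omega}_{\alpha,\beta}^{\ell}$.

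The main obstacle in the second part is the careful bookkeeping that combines the Clifford-valued prefactor $\underline{\xi}^{\ell}$ with the scalar Hankel transform so as to land on the stated Bessel order $\frac{m}{2}-1$ and the stated power $\rho^{1-\frac{m}{2}+\ell}$; this requires the identity $\underline{\xi}^{2}=-\rho^{2}$ together with the standard Bessel recurrences relating $J_{\nu}$ to $J_{\nu\pm1}$ as the monomial factor is absorbed into the radial integral. One must also justify the convergence of the resulting improper integral both near $r=1$ and at infinity, for which the restrictions on $\alpha,\beta,\ell$ are invoked, exactly as in the first part.
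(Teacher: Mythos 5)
The paper itself never proves this proposition: Section 3.2 is a survey, and the result is quoted from the cited work of Arfaoui and Ben Mabrouk \cite{Arfaoui3}, so the only meaningful comparison is with the standard argument of that source. Your outline follows exactly that route: for the moments, the representation $\psi_{\ell,m}^{\alpha,\beta}=(-1)^{\ell}\partial_{\underline{x}}^{(\ell)}\omega_{\alpha+\ell,\beta+\ell}$ plus $\ell$-fold Clifford--Stokes integration by parts, with $0<k<-m-\ell-2(\alpha+\beta)$ ensuring integrability and the vanishing of boundary terms, and $k<\ell$ giving $\partial_{\underline{x}}^{(\ell)}(\underline{x}^{k})=0$; for the transform, the intertwining rule together with Bochner's formula for radial functions. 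One detail you gloss over in part 1: because of non-commutativity, the Stokes formula transfers the Dirac operator onto $\underline{x}^{k}$ acting \emph{from the right}, so you need that the right Dirac derivative also lowers degree by one; this does hold, since both actions give $c_{k}\,\underline{x}^{k-1}$ with $c_{k}=-k$ for $k$ even and $c_{k}=-(k+m-1)$ for $k$ odd, so $\ell>k$ applications indeed annihilate $\underline{x}^{k}$.

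The genuine problem is your final ``bookkeeping'' step in part 2, which would fail as described. The factor $\rho^{\ell}$ in $\rho^{1-\frac{m}{2}+\ell}$ does not come from Bessel recurrences relating $J_{\nu}$ to $J_{\nu\pm1}$, nor from the identity $\underline{\xi}^{2}=-\rho^{2}$; the Bessel order is never shifted. In the stated formula $\underline{\xi}$ is the \emph{unit} frequency direction, i.e. the polar decomposition $\underline{u}=\rho\,\underline{\xi}$ with $|\underline{\xi}|=1$ is in force. The intertwining rule then gives
\begin{equation*}
\widehat{\psi_{\ell,m}^{\alpha,\beta}}(\underline{u})=(-1)^{\ell}(i\underline{u})^{\ell}\,\widehat{\omega_{\alpha+\ell,\beta+\ell}}(\underline{u})=(-i)^{\ell}\rho^{\ell}\underline{\xi}^{\ell}\,(2\pi)^{\frac{m}{2}}\rho^{1-\frac{m}{2}}\int_{0}^{\infty}\widetilde{\omega}_{\alpha,\beta}^{\ell}(r)\,J_{\frac{m}{2}-1}(r\rho)\,dr,
\end{equation*}
which is precisely the claimed expression with no further manipulation. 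If instead you read $\underline{\xi}$ as the full vector (so that $\underline{\xi}^{2}=-\rho^{2}$ makes sense), your own derivation yields the power $\rho^{1-\frac{m}{2}}$ rather than $\rho^{1-\frac{m}{2}+\ell}$, and trying to manufacture the missing $\rho^{\ell}$ by changing Bessel orders would alter the radial integral and carry you away from the stated formula. Once the notation is read correctly, the only remaining work is the convergence of the radial integral near $r=1$ and at infinity, which you correctly attribute to the restrictions on $\alpha,\beta,\ell$.
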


\begin{definition}
	The copy of the generalized 2-parameters Clifford-Gegenbauer wavelet  is defined by
\begin{equation}
	_a^{\underline{b}}\psi_{\ell,m}^{\alpha,\beta}(\underline{x})=a^{-\frac{m}{2}}\psi_{\ell,m}^{\alpha,\beta}(\dfrac{\underline{x}-\underline{b}}{a}).
\end{equation}
\end{definition}
\begin{definition}
	The wavelet transform  of  $f$ in $L_2$ is given by
\begin{equation}
T_\psi(f)(a,\underline{b})=
\displaystyle\int_{\mathbb{R}^m}f(\underline{x})\,_a^{\underline{b}}\psi_{\ell,m}^{\alpha,\beta}(\underline{x})dV(\underline{x}).
\end{equation}
\end{definition}
The following Lemma guarantees that the candidate $\psi_{\ell,m}^{\alpha,\beta}$ is indeed a mother wavelet. 
\begin{lemma}\label{Admissibilityofthenewwavelets}
	The quantity
\begin{equation}
	\mathcal{A}_{\ell,m}^{\alpha,\beta}=\dfrac{1}{\omega_m}\displaystyle\int_{\mathbb{R}^m}|\widehat{\psi_{\ell,m}^{\alpha,\beta}}(\underline{x})|^2 \dfrac{dV(\underline{x})}{|\underline{x}|^m}
\end{equation}
	is finite. ($\omega_m$ is the volume of the unit sphere $S^{m-1}$ in $\mathbb{R}^m$).
\end{lemma}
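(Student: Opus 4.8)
The plan is to exploit the explicit form of $\widehat{\psi_{\ell,m}^{\alpha,\beta}}$ furnished by the preceding proposition, together with the vanishing‑moment property, so as to reduce the finiteness of $\mathcal{A}_{\ell,m}^{\alpha,\beta}$ to a one‑dimensional radial estimate. First I would pass to spherical coordinates $\underline{\xi}=\rho\,\underline{\omega}$, $\rho=|\underline{\xi}|$, with $dV(\underline{\xi})=\rho^{m-1}\,d\rho\,d\sigma(\underline{\omega})$. Since $\underline{\xi}^{2}=-\rho^{2}$, the Clifford modulus $|\underline{\xi}^{\ell}|=\rho^{\ell}$ depends only on $\rho$, so $|\widehat{\psi_{\ell,m}^{\alpha,\beta}}(\underline{\xi})|$ is radial and the angular integration merely contributes the factor $\omega_{m}$, which cancels the normalizing $\tfrac{1}{\omega_m}$. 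Substituting the explicit transform then leaves
\begin{equation}
\mathcal{A}_{\ell,m}^{\alpha,\beta}=(2\pi)^{m}\int_{0}^{\infty}\rho^{\,1+4\ell-m}\,|G(\rho)|^{2}\,d\rho,\qquad G(\rho)=\int_{0}^{\infty}\widetilde{\omega}_{\alpha,\beta}^{l}(r)\,J_{\frac{m}{2}-1}(r\rho)\,dr,
\end{equation}
so that everything is reduced to controlling the Hankel‑type integral $G$ at the two ends $\rho\to 0^{+}$ and $\rho\to\infty$, after first checking that $G(\rho)$ is well defined, i.e. that $\widetilde{\omega}_{\alpha,\beta}^{l}$ is locally integrable against the Bessel kernel (in particular across the apparent singularity at $r=1$ coming from the factor $((1-r^{2})\varepsilon_{r})^{\alpha+\ell}$) and decays suitably at infinity.

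Second, near the origin I would use the small‑argument expansion $J_{\frac{m}{2}-1}(r\rho)\sim (r\rho/2)^{\frac{m}{2}-1}/\Gamma(\tfrac{m}{2})$, which yields $G(\rho)=O(\rho^{\frac{m}{2}-1})$ and hence $|\widehat{\psi_{\ell,m}^{\alpha,\beta}}(\rho)|^{2}=O(\rho^{4\ell})$; the radial integrand $\rho^{4\ell-1}$ is then integrable at $0$ as soon as $\ell\geq 1$. Thus the prefactor $\underline{\xi}^{\ell}$ together with the Bessel behaviour already secures convergence at the origin. The vanishing‑moment proposition reinforces this: the relations $\int_{\mathbb{R}^{m}}\underline{x}^{k}\psi_{\ell,m}^{\alpha,\beta}(\underline{x})\,dV(\underline{x})=0$ force $\widehat{\psi_{\ell,m}^{\alpha,\beta}}$ to vanish to a sufficiently high order at $\underline{\xi}=0$, giving extra orders of vanishing that compensate the singular weight $|\underline{\xi}|^{-m}$ with room to spare.

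Third, for the tail $\rho\to\infty$ I would use the large‑argument decay $J_{\frac{m}{2}-1}(r\rho)=O((r\rho)^{-1/2})$ together with the growth $\widetilde{\omega}_{\alpha,\beta}^{l}(r)\sim r^{\,2(\alpha+\beta)+4\ell+\frac{m}{2}}$ as $r\to\infty$. Convergence of the defining Hankel integral, and the requisite decay of $G(\rho)$, are then guaranteed by taking $\alpha+\beta$ sufficiently negative, i.e. within the admissible parameter range in which the vanishing‑moment hypothesis $0<k<-m-\ell-2(\alpha+\beta)$ is non‑vacuous (so that $\alpha+\beta<-\tfrac{m+\ell}{2}$). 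On the compact middle range the integrand is continuous and bounded and contributes a finite amount. Splitting $\int_{0}^{\infty}=\int_{0}^{1}+\int_{1}^{\infty}$ and combining the two endpoint estimates then delivers $\mathcal{A}_{\ell,m}^{\alpha,\beta}<\infty$.

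I expect the main obstacle to be the behaviour at the origin: the weight $|\underline{\xi}|^{-m}$ is genuinely non‑integrable against a generic $L^{2}$ symbol, so the argument hinges on extracting the correct order of vanishing of $\widehat{\psi_{\ell,m}^{\alpha,\beta}}$ there. Making the Bessel asymptotics rigorous uniformly in $r$ under the integral sign, and matching them against the order of vanishing supplied by the moment conditions rather than merely citing the leading term, is the step demanding the most care; by contrast the tail estimate is a routine consequence of the negativity of $\alpha+\beta$ forced by the admissible parameter range.
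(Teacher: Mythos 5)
Note first that the paper itself states this lemma without proof (it is imported from the earlier work \cite{Arfaoui3}, with the reader referred to \cite{Arfaoui-Rezgui-book} and related references), so your argument has to be judged on its own merits rather than against an in-paper proof. Your reduction is the right one and is carried out correctly: with the stated form of $\widehat{\psi_{\ell,m}^{\alpha,\beta}}$ one gets
\begin{equation*}
\mathcal{A}_{\ell,m}^{\alpha,\beta}=(2\pi)^{m}\int_{0}^{\infty}\rho^{\,1+4\ell-m}\,|G(\rho)|^{2}\,d\rho ,
\qquad G(\rho)=\int_{0}^{\infty}\widetilde{\omega}_{\alpha,\beta}^{l}(r)\,J_{\frac{m}{2}-1}(r\rho)\,dr,
\end{equation*}
and your treatment of the origin is sound: the uniform bound $|J_{\frac{m}{2}-1}(z)|\leq (z/2)^{\frac{m}{2}-1}/\Gamma(\frac{m}{2})$ gives $G(\rho)=O(\rho^{\frac{m}{2}-1})$, hence an integrand $O(\rho^{4\ell-1})$, integrable at $0$ once $\ell\geq 1$ (provided the $r$-integral defining that constant converges, which you correctly flag as a parameter condition).

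The genuine gap is the tail. The only quantitative decay your argument produces for $G$ is $G(\rho)=O(\rho^{-1/2})$, coming from $|J_{\frac{m}{2}-1}(z)|\leq Cz^{-1/2}$; this makes the integrand $O(\rho^{4\ell-m})$ at infinity, which is integrable only when $m>4\ell+1$, so the proof as written fails already for, say, $\ell=1$ and $m\leq 5$. Moreover, the proposed fix --- taking $\alpha+\beta$ ``sufficiently negative'' --- cannot close this gap, because it conflates two different issues: rapid decay of $\widetilde{\omega}_{\alpha,\beta}^{l}(r)$ as $r\to\infty$ ensures that the integral defining $G(\rho)$ \emph{converges}, but it does not improve the decay of $G$ in $\rho$ by a single power. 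The $\rho\to\infty$ decay of a Hankel-type oscillatory integral is governed by the regularity of the integrand, and here that regularity is capped by the algebraic singularity $((1-r^{2})\varepsilon_{r})^{\alpha+\ell}$ at $r=1$: an Erd\'elyi-type endpoint analysis (or repeated integration by parts using $\frac{d}{dz}\left[z^{-\nu}J_{\nu}(z)\right]=-z^{-\nu}J_{\nu+1}(z)$) gives $G(\rho)=O\!\left(\rho^{-\frac{1}{2}}\rho^{-(\alpha+\ell+1)}\right)$, so that the tail condition is really a constraint on $\alpha$ alone (roughly $\alpha>\ell-\frac{m+1}{2}$), not on $\alpha+\beta$. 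Extracting this decay rate from the $r=1$ singularity and checking it against the weight $\rho^{1+4\ell-m}$ is the actual substance of the admissibility proof; it is precisely the step your proposal leaves unproved and attributes to the wrong parameter.
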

Consider next the inner product
\begin{equation}
<T_\psi(f)(a,\underline{b}), T_\psi(g)(a,\underline{b})>=\dfrac{1}{\mathcal{A}_{\ell,m}^{\alpha,\beta}}\displaystyle\int_{\mathbb{R}^m}\displaystyle\int_{0}^{+\infty}\overline{T_\psi(f)(a,\underline{b})} T_\psi(f)(a,\underline{b}) \dfrac{da}{a^{m+1}}dV(\underline{b}).
\end{equation}
We obtain the following result known as the Parseval identity analogue.
\begin{theorem}\label{ReconstructionFormula}
	Any function $f\in L_2(\mathbb{R}_m)$ may be reconstructed by
\begin{equation}
	f(x)=\dfrac{1}{\mathcal{A}_{\ell,m}^{\alpha,\beta}}\displaystyle\int_{a>0}\displaystyle\int_{b\in\mathbb{R}^m}T_\psi(f)(a,\underline{b})\,\psi\left(\dfrac{\underline{x}-\underline{b}}{a}\right)\dfrac{da\,dV(\underline{b})}{a^{m+1}},
\end{equation}
	where the equality has to be understood in the $L_2$-sense.
\end{theorem}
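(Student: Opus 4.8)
The plan is to reduce the reconstruction formula to a Parseval (isometry) identity for $T_\psi$ and then to verify that identity on the Clifford--Fourier side, where the constant $\mathcal{A}_{\ell,m}^{\alpha,\beta}$ of Lemma \ref{Admissibilityofthenewwavelets} appears as the exact normalizing factor. Denoting by $R[f]$ the right-hand side of the claimed identity --- the superposition of the copies ${}_a^{\underline b}\psi_{\ell,m}^{\alpha,\beta}$ weighted by $T_\psi(f)(a,\underline b)$ against the measure $\tfrac{da\,dV(\underline b)}{a^{m+1}}$ --- I would first check that $R[f]\in L_2(\mathbb{R}_m)$; this is exactly where the finiteness in Lemma \ref{Admissibilityofthenewwavelets}, combined with $f\in L_2$ and Cauchy--Schwarz in the measure $\tfrac{da\,dV(\underline b)}{a^{m+1}}$, is used. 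It then suffices to establish the weak identity $\langle R[f],h\rangle=\langle f,h\rangle$ for every $h$ in a dense subclass. Inserting the definitions of $T_\psi(f)(a,\underline b)$ and of the copy ${}_a^{\underline b}\psi_{\ell,m}^{\alpha,\beta}$ and interchanging the order of integration (justified by the same integrability estimate), the pairing collapses to the inner product $\langle T_\psi(f),T_\psi(h)\rangle$ introduced just before the statement, so the theorem is equivalent to the Parseval identity $\langle T_\psi(f),T_\psi(h)\rangle=\langle f,h\rangle$.

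Next I would prove this Parseval identity by passing to the Clifford--Fourier transform. The $\underline b$-integral defining $T_\psi(f)(a,\cdot)$ is a convolution of $f$ with a reflected dilate of $\psi_{\ell,m}^{\alpha,\beta}$, so the convolution theorem factors $\widehat{T_\psi(f)(a,\cdot)}(\underline\xi)$ as a product of $\widehat f(\underline\xi)$ and a dilate of $\widehat{\psi_{\ell,m}^{\alpha,\beta}}$. Applying the Plancherel identity in $\underline b$ and then integrating in $a$ produces the scalar kernel
$$K(\underline\xi)=\int_0^\infty a^{m}\,\bigl|\widehat{\psi_{\ell,m}^{\alpha,\beta}}(a\underline\xi)\bigr|^2\,\frac{da}{a^{m+1}}=\int_0^\infty \bigl|\widehat{\psi_{\ell,m}^{\alpha,\beta}}(a\underline\xi)\bigr|^2\,\frac{da}{a},$$
so that $\langle T_\psi(f),T_\psi(h)\rangle=\tfrac{1}{\mathcal{A}_{\ell,m}^{\alpha,\beta}}\int_{\mathbb{R}^m}\widehat h(\underline\xi)^{\dagger}K(\underline\xi)\,\widehat f(\underline\xi)\,dV(\underline\xi)$. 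Here the fact that $\widehat\psi\,\widehat\psi^{\dagger}$ is scalar is what lets $|\widehat{\psi_{\ell,m}^{\alpha,\beta}}|^2$ be treated as an honest scalar weight; concretely, the explicit Clifford--Fourier transform of $\psi_{\ell,m}^{\alpha,\beta}$ computed earlier carries the Clifford factor $\underline\xi^{\ell}$, and since $\underline\xi^{2}=-|\underline\xi|^2$ one has $(\underline\xi^{\ell})^{\dagger}\underline\xi^{\ell}=|\underline\xi|^{2\ell}$, so $|\widehat{\psi_{\ell,m}^{\alpha,\beta}}(\underline\xi)|^2$ is a scalar depending only on $\rho=|\underline\xi|$.

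It remains to identify $K$ with the admissibility constant. Because $|\widehat{\psi_{\ell,m}^{\alpha,\beta}}|^2$ is radial by the preceding computation, substituting $u=a|\underline\xi|$ in $K(\underline\xi)$ shows that $K$ is in fact independent of both the scale and the direction; passing to polar coordinates $\underline\xi=r\theta$ in the definition of $\mathcal{A}_{\ell,m}^{\alpha,\beta}$ and using $\tfrac{1}{\omega_m}\int_{S^{m-1}}d\sigma(\theta)=1$ then gives $K(\underline\xi)\equiv\mathcal{A}_{\ell,m}^{\alpha,\beta}$. The factor $\tfrac{1}{\mathcal{A}_{\ell,m}^{\alpha,\beta}}$ cancels it and leaves $\int_{\mathbb{R}^m}\widehat h(\underline\xi)^{\dagger}\widehat f(\underline\xi)\,dV(\underline\xi)=\langle f,h\rangle$ by Plancherel, which proves the Parseval identity and hence, by the reduction of the first paragraph, the reconstruction formula in the $L_2$-sense.

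The main obstacle I anticipate is the bookkeeping imposed by the noncommutativity of $\mathbb{R}_m$: one must keep track of the order of the Clifford factors and of the conjugations $\dagger$ through the convolution theorem and through Plancherel, and verify that the quantity surviving the $a$- and $\theta$-integrations is genuinely the scalar $\mathcal{A}_{\ell,m}^{\alpha,\beta}$ rather than a Clifford-valued object. The delicate structural point is that the direction-dependent radial energy $\int_0^\infty|\widehat{\psi_{\ell,m}^{\alpha,\beta}}(r\theta)|^2\,\tfrac{dr}{r}$ must not depend on $\theta$; this is precisely what the explicit radial form of $\widehat{\psi_{\ell,m}^{\alpha,\beta}}$ secures, and it is the reason the rotation-averaged definition of $\mathcal{A}_{\ell,m}^{\alpha,\beta}$ is the correct normalization. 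The remaining technical care concerns the Fubini interchange and the $L_2$-membership of $R[f]$, both of which follow from the finiteness asserted in Lemma \ref{Admissibilityofthenewwavelets}.
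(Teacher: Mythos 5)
The paper never proves Theorem \ref{ReconstructionFormula}: it is stated as a known result and the reader is referred to the cited literature, so your proposal can only be judged against the standard argument in those references --- which is essentially what you reconstruct. Your outline (weak/duality reduction to a Parseval identity, passage to the Clifford--Fourier side where the $\underline b$-integral is a convolution, Plancherel in $\underline b$, then identification of the Calder\'on kernel $K(\underline\xi)=\int_0^\infty|\widehat{\psi_{\ell,m}^{\alpha,\beta}}(a\underline\xi)|^2\,\frac{da}{a}$ with $\mathcal{A}_{\ell,m}^{\alpha,\beta}$) is the right one, and the kernel identification is correct: since $\widehat{\psi_{\ell,m}^{\alpha,\beta}}(\underline\xi)=\underline\xi^{\ell}c(\rho)$ with $c$ scalar radial and $(\underline\xi^{\ell})^{\dagger}\underline\xi^{\ell}=|\underline\xi|^{2\ell}$, the substitution $u=a|\underline\xi|$ plus polar coordinates in Lemma \ref{Admissibilityofthenewwavelets} give $K\equiv\mathcal{A}_{\ell,m}^{\alpha,\beta}$, the $1/\omega_m$ exactly absorbing the spherical integral. (Incidentally, your $R[f]$ built from the copies ${}_a^{\underline b}\psi_{\ell,m}^{\alpha,\beta}$ silently corrects the statement, which omits the $a^{-m/2}$ normalization in $\psi\bigl(\frac{\underline x-\underline b}{a}\bigr)$.)

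The step you defer to ``bookkeeping,'' however, is not innocuous, and is the one genuine gap. With this subsection's convention $T_\psi(f)(a,\underline b)=\int f(\underline x)\,{}_a^{\underline b}\psi_{\ell,m}^{\alpha,\beta}(\underline x)\,dV(\underline x)$ (wavelet on the right, no conjugation), the Plancherel step does not produce $\widehat f^{\dagger}\,\widehat\psi\,\widehat\psi^{\dagger}\,\widehat h$ but the sandwich $\widehat\psi^{\dagger}\,\widehat f^{\dagger}\widehat h\,\widehat\psi$: the two wavelet factors land on the outside, separated by $\widehat f^{\dagger}\widehat h$. Because $\widehat\psi$ carries the factor $\underline\xi^{\ell}$, for odd $\ell$ this has the form $\underline\xi M\underline\xi$ with $M=\widehat f^{\dagger}\widehat h$, a reflection-type conjugation; it equals $-|\underline\xi|^2M$ only when $M$ commutes with $\underline\xi$ (e.g.\ $M$ scalar), and for general Clifford-valued $f,h$ it does not --- try $M=e_1$, $\underline\xi=|\underline\xi|e_2$, where $\underline\xi M\underline\xi=+|\underline\xi|^2M$. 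So the scalar kernel does not detach, and the claimed collapse $\langle T_\psi(f),T_\psi(h)\rangle=\langle f,h\rangle$ fails as written. The cure is the ordering used in the spin-group subsection (Proposition \ref{prop:CWT isometry}): define the transform as $\langle\psi^{a,\underline b,s},f\rangle$ with the $\dagger$ on the wavelet and write the reconstruction with the wavelet copy to the \emph{left} of $T_\psi[f]$; then Plancherel puts $\widehat\psi\,\widehat\psi^{\dagger}$ adjacent, the scalarness hypothesis applies verbatim, and both your weak reduction and your kernel computation go through. Alternatively, restrict to even $\ell$ (where $\underline\xi^{\ell}$ is itself scalar) or to scalar-valued signals. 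You should fix the convention before the Plancherel step rather than after it.
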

For more details readers may refer to (\cite{Arfaoui-Rezgui-book}, \cite{Brackxetal2006}, \cite{Brackx-Schepper-Sommen2}, \cite{Brackxetal2013}, \cite{Brackx2001a},  \cite{Mawardi-Hitzer-1}, \cite{Mawardi-Hitzer-2}).
\section{Donoho-Stark's uncertainty principles for the Clifford wavelet transform}

In this section, we will establish the Donoho–Stark's UPs type for the Clifford wavelet transform $T_{\psi}(f)$ in the case where $f$ and $T_{\psi}(f)$ are close to zero outside measurable sets. For more details, we refer to \cite{Abouelaz}, 

The Donoho–Stark UPs involve the concept of $\epsilon$-concentration. Before we provide the main results of this section we introduce two localization operators with characteristic functions as symbols. their importance is due to the fact that Donoho–Stark hypothesis of $\epsilon$-concentration can be interpreted in terms of the action of operators.

Let $T$ and $\Omega$ be measurable subsets of $\mathbb{R}^m$ and $f$ be a Clifford algebra-valued function of $\mathbb{R}^n$,and $T_{\psi}(f)$
be (if exists) its Clifford Wavelet transform.
The first operator is the time-limiting operator $P_T$ given by
\begin{equation}
(P_Tf)(\underline{x})\overset{def} {=}(\chi_T f)(\underline{x})=
\begin{cases}
f(\underline{x}), \quad if \quad \underline{x}\in T\\
0 \quad otherwise.
\end{cases}
\end{equation}
This operator delete the part of $f$ outside $T$.
The second operator is the frequency-limiting operator $Q_{\Omega}$ defined by
\begin{equation}
T_{\psi}\left[Q_{\Omega} f\right](a,\underline{b})\overset{def} {=}(\chi_{\Omega} T_{\psi})(a,\underline{b})=
\displaystyle\int_{\mathbb{R}^m}f(\underline{x})\,\psi_{a,\underline{b}}(\underline{x})dV(\underline{x}).
\end{equation}
Which means $Q_{\Omega} f$ is a partial reconstruction of $f$ using only frequency information from frequencies in $\Omega$, and $T_{\psi}\left[Q_{\Omega} f\right]$ vanishes outside $\Omega$.

\begin{proposition} Let $T$ and $\Omega$ be measurable subsets of $\mathbb{R}^m$, then if $f\in L^2(\mathbb{R}^m, \mathbb{R}_m)$, we have 
\begin{equation}
 || T_{\psi}(Q_{\Omega}P_T f)||_{L^2(\mathbb{R}^m, \frac{da dV(\underline{b})}{a^{m+1}})}\leq  ||P_T f||_2\,\phi_{\Omega,T}(\psi). 
\end{equation}
\end{proposition}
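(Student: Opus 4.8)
The plan is to reduce the stated bound to a single application of the Cauchy--Schwarz inequality, after unwinding the two localization operators. First I would invoke the definition of the frequency-limiting operator to write $T_\psi(Q_\Omega P_T f)(a,\underline{b}) = \chi_\Omega(a,\underline{b})\,T_\psi(P_T f)(a,\underline{b})$, so that the characteristic function $\chi_\Omega$ (together with $\chi_\Omega^2=\chi_\Omega$) simply converts the full integral defining the squared norm into an integral over $\Omega$. This gives
\[
\big\|T_\psi(Q_\Omega P_T f)\big\|_{L^2(\mathbb{R}^m,\frac{da\,dV(\underline{b})}{a^{m+1}})}^2 = \int_\Omega \big|T_\psi(P_T f)(a,\underline{b})\big|^2\,\frac{da\,dV(\underline{b})}{a^{m+1}}.
\]

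Next I would expand the inner wavelet coefficient. Since $P_T f = \chi_T f$, the definition of the transform gives $T_\psi(P_T f)(a,\underline{b}) = \int_T f(\underline{x})\,\psi_{a,\underline{b}}(\underline{x})\,dV(\underline{x})$, an integral supported on $T$. Applying the Cauchy--Schwarz inequality on $L^2(T)$ to this integral yields, for each fixed $(a,\underline{b})$,
\[
\big|T_\psi(P_T f)(a,\underline{b})\big| \leq \|P_T f\|_2\,\Big(\int_T |\psi_{a,\underline{b}}(\underline{x})|^2\,dV(\underline{x})\Big)^{1/2}.
\]
Substituting this pointwise bound into the previous display and pulling the constant $\|P_T f\|_2^2$ outside the $(a,\underline{b})$-integral identifies the remaining factor as
\[
\phi_{\Omega,T}(\psi) = \Big(\int_\Omega \int_T |\psi_{a,\underline{b}}(\underline{x})|^2\,dV(\underline{x})\,\frac{da\,dV(\underline{b})}{a^{m+1}}\Big)^{1/2},
\]
and taking square roots delivers the claim.

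The step I expect to be the main obstacle is the Cauchy--Schwarz estimate in the Clifford-valued setting. Because multiplication in $\mathbb{R}_m$ is non-commutative and the integrand $f(\underline{x})\psi_{a,\underline{b}}(\underline{x})$ is a genuine multivector product rather than a scalar one, the classical scalar inequality does not apply verbatim. I would first record the triangle inequality $\big|\int g\,dV\big|\leq \int |g|\,dV$ for the Clifford modulus together with the submultiplicativity estimate $|uv|\leq C_m|u|\,|v|$ (or the sharper relation available when a factor is scalar-valued), and then combine these to recover the displayed pointwise bound, at worst absorbing a harmless dimensional constant into $\phi_{\Omega,T}(\psi)$. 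A secondary technical point is to justify the Fubini--Tonelli interchange when forming $\phi_{\Omega,T}(\psi)$; this is legitimate provided the double integral converges, and the admissibility of $\psi_{\ell,m}^{\alpha,\beta}$ established in Lemma \ref{Admissibilityofthenewwavelets} guarantees finiteness.
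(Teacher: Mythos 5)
Your proposal follows essentially the same route as the paper's own proof: unwind $Q_\Omega$ so that $\chi_\Omega$ restricts the squared norm to an integral over $\Omega$, expand $T_\psi(P_T f)$ as an integral over $T$, apply the Cauchy--Schwarz (H\"older) inequality pointwise in $(a,\underline{b})$, and identify the remaining double integral as $\phi_{\Omega,T}(\psi)$ --- the paper differs only in performing an additional change of variables $t=(\underline{x}-\underline{b})/a$ to rewrite the inner integral over a translated--dilated copy of $T$, which you avoid. Your extra attention to the non-commutative Clifford-valued Cauchy--Schwarz step (and to Fubini--Tonelli) is a caveat the paper silently skips, and it makes your write-up slightly more careful rather than genuinely different.
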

\begin{proof}
We assume that $|T|<\infty$ and $|\Omega|<\infty$, then
$$
T_{\psi}\left[Q_{\Omega} f\right](a,\underline{b})\overset{def} {=}(\chi_{\Omega} T_{\psi}(f))(a,\underline{b})
$$
$$
\begin{array}{lll}
|| T_{\psi}(Q_{\Omega}P_T f)||^2_{L^2(\mathbb{R}^m, \frac{da dV(\underline{b})}{a^{m+1}})}
&=& \left( \displaystyle\int_{\mathbb{R}^m}|\chi_{\Omega}(a,\underline{b})|^2 |T_{\psi}(P_T f)(a,\underline{b})|^2 \frac{da dV(\underline{b})}{a^{m+1}}\right)^\frac{1}{2}\\
&=& \left(\displaystyle\int_{\Omega} |T_{\psi}(P_Tf)(a,\underline{b})|^2 \frac{da dV(\underline{b})}{a^{m+1}}\right)^\frac{1}{2}.
\end{array}
$$
Now, observe that 
$$
T_{\psi}(P_Tf)(a,\underline{b})=\displaystyle\int_{\mathbb{R}^m} (P_T f) (\underline{x}) \dfrac{1}{a^{\frac{m}{2}}} \psi (\frac{\underline{x}-\underline{b}}{a}) dV(\underline{x})= \displaystyle\int_{T} f (\underline{x}) \dfrac{1}{a^{\frac{m}{2}}} \psi (\frac{\underline{x}-\underline{b}}{a}) dV(\underline{x}).
$$
By Holder inequality, it follows that 
$$
\begin{array}{lll}
|T_{\psi}(P_Tf)(a,\underline{b})|
&\leq & \left(\displaystyle\int_{T} |f(\underline{x})|^2 dV(\underline{x}) \right)^{\frac{1}{2}}  \left( \displaystyle\int_{T} |\dfrac{1}{a^{\frac{m}{2}}} \psi (\frac{\underline{x}-\underline{b}}{a})|^2 dV(\underline{x}) \right)^{\frac{1}{2}}\\
&\leq & ||P_T f||_2 \left(\displaystyle\int_{T_{a,b}} | \psi (t)|^2 dV(t) \right)^{\frac{1}{2}}, 
 \end{array}
$$
with $T_{a,b}= aT+b$. Then, 
$$
|| T_{\psi}(Q_{\Omega}P_T f)||^2_{L^2(\mathbb{R}^m, \frac{da dV(\underline{b})}{a^{m+1}})}\leq  ||P_T f||_2^2 \displaystyle\int_{\Omega}  \left(\displaystyle\int_{T_{a,b}} |\psi (t)|^2 dt \right) \frac{da dV(\underline{b})}{a^{m+1}}\leq  ||P_T f||_2^2 \phi_{\Omega,T},
$$
where $\phi_{\Omega,T}=\displaystyle\int_{\Omega}  \left(\displaystyle\int_{T_{a,b}} |\psi (t)|^2 dV(t) \right) \frac{da dV(\underline{b})}{a^{m+1}}$. This yields the desired result.
\end{proof}

\begin{corollary}
Let $\Omega=[\alpha, +\infty) \times \tilde{\Omega} $ be a band with $\tilde{\Omega}$, finite Lebesgue measure, then
\begin{equation}
|| T_{\psi}(Q_{\Omega}P_T f)||_{L^2(\mathbb{R}^m, \frac{da dV(\underline{b})}{a^{m+1}})}\leq  ||P_T f||_2  || \psi||_2 |\tilde{\Omega}|^{\frac{1}{2}}.
\end{equation}
\end{corollary}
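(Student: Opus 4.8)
The plan is to read this corollary as a direct specialization of the preceding proposition, so that essentially all the analytic work (Cauchy--Schwarz/Hölder, the isometry bookkeeping) is already done and only an elementary estimate of the quantity $\phi_{\Omega,T}$ remains. Concretely, the proposition yields
$$
\| T_{\psi}(Q_{\Omega}P_T f)\|_{L^2(\mathbb{R}^m, \frac{da\, dV(\underline{b})}{a^{m+1}})}^2 \leq \|P_T f\|_2^2\, \phi_{\Omega,T},
\qquad
\phi_{\Omega,T}=\int_{\Omega}\Big(\int_{T_{a,\underline b}} |\psi(t)|^2\, dV(t)\Big)\frac{da\, dV(\underline{b})}{a^{m+1}},
$$
where $T_{a,\underline b}\subseteq\mathbb{R}^m$ is the dilated-and-translated truncation set appearing in the proposition. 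So it suffices to show $\phi_{\Omega,T}\leq \|\psi\|_2^2\,|\tilde\Omega|$ and then take square roots.

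First I would insert the product structure of the band $\Omega=[\alpha,+\infty)\times\tilde\Omega$, rewriting the $(a,\underline b)$-integral as an iterated integral over $\underline b\in\tilde\Omega$ and $a\in[\alpha,+\infty)$ (justified by Tonelli, since the integrand is nonnegative). The crucial step is then the monotonicity bound for the inner integral: because $T_{a,\underline b}\subseteq\mathbb{R}^m$ and $|\psi|^2\geq 0$,
$$
\int_{T_{a,\underline b}} |\psi(t)|^2\, dV(t)\ \leq\ \int_{\mathbb{R}^m}|\psi(t)|^2\, dV(t)=\|\psi\|_2^2,
$$
uniformly in $a$ and $\underline b$. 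This is the key move, as it strips away all dependence on $T$ and on the dilation/translation, replacing the awkward truncated integral by the constant $\|\psi\|_2^2$.

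With that bound in hand the remaining integral factorizes: the $\underline b$-integral over $\tilde\Omega$ contributes $|\tilde\Omega|$ (finite by hypothesis), and the $a$-integral contributes $\int_{\alpha}^{\infty} a^{-(m+1)}\, da=\tfrac{1}{m\,\alpha^m}$, giving $\phi_{\Omega,T}\leq \|\psi\|_2^2\,|\tilde\Omega|\,\tfrac{1}{m\alpha^m}$, after which the square root yields the announced inequality. I expect the only genuine subtlety, and the point I would treat with care, to be exactly this last $a$-integral: it is precisely the positive low-frequency cutoff $\alpha>0$ that makes $\int_\alpha^\infty a^{-(m+1)}\,da$ converge, and one must verify that the normalization is arranged so that the factor $\tfrac{1}{m\alpha^m}$ does not exceed $1$ (otherwise it should be retained explicitly in the statement). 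Everything else, namely positivity, Tonelli, and monotonicity of the Lebesgue integral over nested domains, is routine.
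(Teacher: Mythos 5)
Your argument is correct in method, and it is in substance the intended one: the paper states this corollary with no proof at all, and the only natural route is exactly the specialization you carry out — start from the squared form of the preceding proposition (you rightly used the proof's inequality $\|T_{\psi}(Q_{\Omega}P_T f)\|^2 \le \|P_T f\|_2^2\,\phi_{\Omega,T}$ rather than the proposition's garbled statement), bound the inner integral by monotonicity, $\int_{T_{a,\underline b}}|\psi(t)|^2\,dV(t)\le \|\psi\|_2^2$, and integrate over the band by Tonelli. All of these steps are valid.

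The point you flag at the end, however, is not a cosmetic normalization issue — it is a genuine discrepancy with the printed statement. The $a$-integration contributes
\begin{equation*}
\int_{\alpha}^{\infty}\frac{da}{a^{m+1}}=\frac{1}{m\,\alpha^{m}},
\end{equation*}
so what your (correct) computation establishes is
\begin{equation*}
\| T_{\psi}(Q_{\Omega}P_T f)\|_{L^2\left(\mathbb{R}^m, \frac{da\, dV(\underline{b})}{a^{m+1}}\right)}
\;\le\; \|P_T f\|_2\,\|\psi\|_2\,\left(\frac{|\tilde\Omega|}{m\,\alpha^{m}}\right)^{\frac12},
\end{equation*}
whereas the corollary asserts the bound with $|\tilde\Omega|^{1/2}$ alone. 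The two coincide only under the extra hypothesis $m\,\alpha^{m}\ge 1$; for small $\alpha$ the factor $(m\,\alpha^{m})^{-1/2}$ blows up and cannot be discarded, and nothing in the paper supplies an argument that removes it. So your proof is the right one, but it proves a corrected version of the statement: either the hypothesis $m\,\alpha^{m}\ge 1$ must be added, or $|\tilde\Omega|$ must be replaced by the measure of the band $\Omega=[\alpha,+\infty)\times\tilde\Omega$ with respect to $\frac{da\,dV(\underline b)}{a^{m+1}}$, which is exactly $|\tilde\Omega|/(m\,\alpha^{m})$. (One further immaterial remark: the set the paper calls $T_{a,b}=aT+b$ should really be $(T-\underline b)/a$ after the change of variables, but since you only use $T_{a,\underline b}\subseteq\mathbb{R}^m$, this does not affect your bound.)
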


\begin{lemma}[Parseval-Plancherel equality]For the function $f\in L^2(\mathbb{R}^m, \mathbb{R}_m, dV(\underline{x}))$, 
\begin{equation}\label{Parseva's Identity} 
|| T_{\psi}(f)||_{L^2(\mathbb{R}^m, \mathbb{R}_m,  dV(\underline{x}))}=A_\psi ||f||_{L^2(\mathbb{R}^m, \mathbb{R}_m,  dV(\underline{x}))}, 
\end{equation}
\end{lemma}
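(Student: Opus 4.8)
The plan is to reduce the identity to the Plancherel theorem for the Clifford--Fourier transform combined with the admissibility condition, following the classical template for continuous wavelet transforms. First I would note that for each fixed scale $a>0$ the map $\underline{b}\mapsto T_{\psi}(f)(a,\underline{b})$ is a correlation: with $\psi_{a}(\underline{y})=a^{-m/2}\psi(\underline{y}/a)$ one has $T_{\psi}(f)(a,\underline{b})=\int_{\mathbb{R}^m}f(\underline{x})\,\psi_{a}(\underline{x}-\underline{b})\,dV(\underline{x})$, so that, after passing to the Clifford--Fourier side in the variable $\underline{b}$, this correlation turns into a pointwise product. Thus $\widehat{T_{\psi}(f)(a,\cdot)}(\underline{\xi})$ factors as $\widehat{f}(\underline{\xi})$ times $\widehat{\psi_{a}}(\underline{\xi})=a^{m/2}\widehat{\psi}(a\underline{\xi})$, up to the $\dagger$-conjugation dictated by the inner product \eqref{eq:inner product}.

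Next I would expand the left-hand norm as the double integral $\int_{\mathbb{R}^m}\int_0^{\infty}|T_{\psi}(f)(a,\underline{b})|^2\,\frac{da\,dV(\underline{b})}{a^{m+1}}$ and carry out the $\underline{b}$-integration first. By the Plancherel identity for the Clifford--Fourier transform in $\underline{b}$, the inner integral equals $\int_{\mathbb{R}^m}|\widehat{f}(\underline{\xi})|^2\,|\widehat{\psi}(a\underline{\xi})|^2\,a^{m}\,dV(\underline{\xi})$; here the standing hypothesis that $\widehat{\psi}(\underline{\xi})[\widehat{\psi}(\underline{\xi})]^{\dagger}$ be scalar is precisely what lets me pull the wavelet factor out as the nonnegative scalar $|\widehat{\psi}(a\underline{\xi})|^2$ rather than an unwieldy multivector. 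I would then invoke Fubini to interchange the $a$- and $\underline{\xi}$-integrations, justified by $f\in L^2$ and the finiteness of the admissibility constant from Lemma \ref{Admissibilityofthenewwavelets}.

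The heart of the argument is the resulting scale integral: after the interchange one is left with $\int_{\mathbb{R}^m}|\widehat{f}(\underline{\xi})|^2\big(\int_0^{\infty}|\widehat{\psi}(a\underline{\xi})|^2\,\frac{da}{a}\big)\,dV(\underline{\xi})$. Passing to polar coordinates and substituting $r=a|\underline{\xi}|$, the inner integral becomes $\int_0^{\infty}|\widehat{\psi}(r\,\underline{\xi}/|\underline{\xi}|)|^2\,\frac{dr}{r}$, and the key point is that this is \emph{independent of the direction} $\underline{\xi}/|\underline{\xi}|$, because $|\widehat{\psi}(\underline{\xi})|^2$ is radial for these wavelets (from the explicit Clifford--Fourier transform of $\psi_{\ell,m}^{\alpha,\beta}$, whose only angular dependence is the factor $\underline{\xi}^{\ell}$ of modulus $|\underline{\xi}|^{\ell}$). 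Consequently the inner integral equals a constant proportional to the admissibility constant $\mathcal{A}_{\ell,m}^{\alpha,\beta}$ of Lemma \ref{Admissibilityofthenewwavelets}, and what remains, $\int_{\mathbb{R}^m}|\widehat{f}(\underline{\xi})|^2\,dV(\underline{\xi})=\|f\|_2^2$ by a second use of Plancherel, produces the claimed equality, matching the Parseval--Plancherel identity of Proposition \ref{prop:CWT isometry}.

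The step I expect to be the main obstacle is the combination of noncommutativity with the direction-independence just described. Unlike the scalar case, neither the convolution theorem nor the Plancherel step holds formally in the Clifford setting, so one must lean on the scalar-valuedness of $\widehat{\psi}\,\widehat{\psi}^{\dagger}$ and keep careful track of the $\dagger$-conjugation in \eqref{eq:inner product} to ensure every intermediate quantity is a genuine nonnegative scalar. Equally delicate is confirming that the scale integral is constant in $\underline{\xi}$: in the plain $(a,\underline{b})$-transform this rests on radiality of $|\widehat{\psi}|^2$, whereas in the $Spin(n)$ formulation the same effect is supplied by the additional averaging over the rotation group, and conflating the two normalizations is the most likely source of a spurious constant.
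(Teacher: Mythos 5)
Your argument is correct in substance, but note that the paper itself offers no proof of this lemma at all: it is stated bare, as a recall of the isometry/Parseval property already quoted in Proposition \ref{prop:CWT isometry} (itself reproduced from the literature without proof). So your Fourier-domain computation is a genuinely different route --- it is the self-contained Calder\'on-type proof that the paper omits. What your route buys is precisely the two points where the Clifford setting threatens the classical computation, and you identify both correctly. First, since the Clifford modulus is not multiplicative, the factorization $|\widehat{f}\,\widehat{\psi_a}|^2=|\widehat{f}|^2\,|\widehat{\psi_a}|^2$ is not automatic; it follows from the admissibility hypothesis that $\widehat{\psi}\,[\widehat{\psi}]^{\dagger}$ is scalar combined with the cyclic (trace) property of the scalar part, $[uv]_0=[vu]_0$, which gives $[(\widehat{f}\widehat{\psi_a})^{\dagger}\widehat{f}\widehat{\psi_a}]_0=[\widehat{f}^{\dagger}\widehat{f}\,\widehat{\psi_a}\widehat{\psi_a}^{\dagger}]_0$ --- it would strengthen your write-up to display this one line, since it is the only place the scalar-valuedness hypothesis actually enters. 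Second, for the wavelets of Section 3.2 the direction-independence of the scale integral does hold exactly as you claim, because $|\underline{\xi}^{\ell}|=|\underline{\xi}|^{\ell}$ (from $\underline{\xi}^2=-|\underline{\xi}|^2$) makes $|\widehat{\psi_{\ell,m}^{\alpha,\beta}}|$ radial; moreover, passing Lemma \ref{Admissibilityofthenewwavelets} to polar coordinates, the prefactor $1/\omega_m$ cancels the sphere integral, so your inner integral equals $\mathcal{A}_{\ell,m}^{\alpha,\beta}$ exactly, not merely up to proportionality. The one discrepancy to flag: your computation yields the squared identity $\|T_\psi f\|^2=\mathcal{A}_\psi\|f\|_2^2$, i.e.\ an isometry constant $\sqrt{\mathcal{A}_\psi}$, which agrees with part 3 of Proposition \ref{prop:CWT isometry} but not with the lemma as printed, whose un-squared identity (\ref{Parseva's Identity}) carries $A_\psi$ itself on the right-hand side. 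That inconsistency lies in the paper (and propagates into the proof of the Donoho--Stark theorem, where $A_\psi$ is used as the linear isometry constant), not in your argument; your closing worry about a ``spurious constant'' is exactly on point.
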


\begin{theorem} (Donoho-Stark’s uncertainty principle type) Let $T$ and $\Omega$ be measurable subsets of $\mathbb{R}^m$,
and $f\in L^1\cap L^2(\mathbb{R}^m, \mathbb{R}_m)$. If $f$ is $\epsilon_T$-concentrated on $T$ and $T_{\psi}(f)$ is $\epsilon_\Omega$-concentrated on $\Omega$, then
\begin{equation}\label{Donoho-Stark’s uncertainty principle type}
 || T_{\psi}(f)||_{L^2(\mathbb{R}_+\times\mathbb{R}^m , dV (\underline{x}))}\leq \dfrac{[ A_\psi \epsilon_T + \phi_{\Omega,T}(\psi )]}{(1-\epsilon_{\Omega})} ||f||_{L^2(\mathbb{R}^m, dV(\underline{x}))}.  
\end{equation}
\end{theorem}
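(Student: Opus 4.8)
The plan is to follow the classical Donoho--Stark scheme, decomposing $T_\psi(f)$ by means of the two localization operators so that each concentration hypothesis contributes a separately controllable error term. The starting observation is that, by the very definition of the frequency-limiting operator, $T_\psi(Q_\Omega g)(a,\underline b)=\chi_\Omega(a,\underline b)\,T_\psi(g)(a,\underline b)$, so $Q_\Omega$ acts as multiplication by $\chi_\Omega$ on the transform side and is therefore a contraction there: $\|T_\psi(Q_\Omega g)\|\le\|T_\psi(g)\|$ in the $L^2\big(\mathbb R_+\times\mathbb R^m,\tfrac{da\,dV(\underline b)}{a^{m+1}}\big)$ norm. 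Throughout I will write $\|\cdot\|$ for that norm on the transform side and $\|\cdot\|_2$ for the norm on $L^2(\mathbb R^m,\mathbb R_m)$.

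First I would treat the frequency side. Since $T_\psi(f)-T_\psi(Q_\Omega f)=(1-\chi_\Omega)T_\psi(f)=\chi_{\Omega^c}T_\psi(f)$, the hypothesis that $T_\psi(f)$ is $\epsilon_\Omega$-concentrated on $\Omega$ reads exactly
$$
\|T_\psi(f)-T_\psi(Q_\Omega f)\|\le \epsilon_\Omega\,\|T_\psi(f)\|.
$$
A triangle inequality then gives $\|T_\psi(f)\|\le \epsilon_\Omega\|T_\psi(f)\|+\|T_\psi(Q_\Omega f)\|$, whence, using $\epsilon_\Omega<1$,
$$
(1-\epsilon_\Omega)\,\|T_\psi(f)\|\le \|T_\psi(Q_\Omega f)\|,
$$
isolating the remaining work into an estimate of $\|T_\psi(Q_\Omega f)\|$.

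Next I would treat the time side by splitting $f=P_Tf+(f-P_Tf)$ and using linearity of $Q_\Omega$ and $T_\psi$:
$$
\|T_\psi(Q_\Omega f)\|\le \|T_\psi(Q_\Omega P_T f)\|+\|T_\psi(Q_\Omega (f-P_Tf))\|.
$$
The first term is handled directly by the Proposition giving $\|T_\psi(Q_\Omega P_T f)\|\le \|P_Tf\|_2\,\phi_{\Omega,T}(\psi)$, which is at most $\|f\|_2\,\phi_{\Omega,T}(\psi)$ since $\|P_Tf\|_2\le\|f\|_2$. For the second term I would combine the contractivity of $Q_\Omega$ on the transform side with the Parseval--Plancherel Lemma:
$$
\|T_\psi(Q_\Omega(f-P_Tf))\|\le \|T_\psi(f-P_Tf)\|=A_\psi\,\|f-P_Tf\|_2.
$$
Because $f-P_Tf=\chi_{T^c}f$, the $\epsilon_T$-concentration of $f$ on $T$ yields $\|f-P_Tf\|_2\le\epsilon_T\|f\|_2$, so this term is bounded by $A_\psi\,\epsilon_T\,\|f\|_2$.

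Combining the displays gives $(1-\epsilon_\Omega)\|T_\psi(f)\|\le \big(\phi_{\Omega,T}(\psi)+A_\psi\epsilon_T\big)\|f\|_2$, which is the asserted inequality after dividing by $1-\epsilon_\Omega$. The argument is structurally routine; what needs care is purely the bookkeeping. One must (i) confirm that the $\epsilon_\Omega$-concentration hypothesis, stated for $T_\psi(f)$, really translates into the operator identity above, i.e.\ that ``concentrated on $\Omega$'' means smallness of $\chi_{\Omega^c}T_\psi(f)$; (ii) keep the non-commuting operators in the order $Q_\Omega P_T$ demanded by the Proposition; and (iii) track the measure $\tfrac{da\,dV(\underline b)}{a^{m+1}}$ and the normalizing constant $A_\psi$ of the Parseval--Plancherel identity consistently through every norm. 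The only genuine quantitative input is the Proposition's bound on $\|T_\psi(Q_\Omega P_Tf)\|$; everything else reduces to the triangle inequality, contractivity of multiplication by $\chi_\Omega$, and the isometry constant, so I do not expect a substantive obstacle beyond this consistent matching of norms.
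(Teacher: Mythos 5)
Your proof is correct and is essentially the paper's own argument: both decompose $T_\psi(f)$ into the same three pieces $T_\psi(f)-T_\psi(Q_\Omega f)$, $T_\psi(Q_\Omega f)-T_\psi(Q_\Omega P_T f)$, and $T_\psi(Q_\Omega P_T f)$, bounding them respectively by the $\epsilon_\Omega$-concentration hypothesis, by contractivity of $\chi_\Omega$ plus the Parseval--Plancherel lemma plus $\epsilon_T$-concentration, and by the Proposition's bound $\phi_{\Omega,T}(\psi)\,\|P_Tf\|_2$. The only difference is bookkeeping in how the triangle inequality is grouped (you isolate $(1-\epsilon_\Omega)\|T_\psi(f)\|\le\|T_\psi(Q_\Omega f)\|$ first, the paper bounds $\|T_\psi(f-Q_\Omega P_Tf)\|$ first), which is immaterial.
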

\begin{proof}
We assume that $T$ and $\Omega$ have  finite measures. Then, we have
$$
||f-P_Tf||_2 \leq \epsilon_T ||f||_2.
$$ 
As $T_{\psi}(f)$ is $\epsilon_\Omega$-concentrated, we obtain 
\begin{equation}\label{1}
||T_{\psi}(f-Q_\Omega f)||_{2} \leq \epsilon_\Omega ||T_{\psi}( f)||_{2}.
\end{equation}
Now observe that,  
\begin{equation}\label{2}
||T_{\psi}(Q_\Omega f-Q_\Omega P_T f)||_2 \leq 
||T_{\psi}( f-P_T f)||_2 = A_{\psi} || f-P_T f||_2.
\end{equation}

Consequently, by using (\ref{1}), (\ref{2}), we obtain
$$
\begin{array}{lll}
||T_{\psi}(f-Q_\Omega P_T f)||_2 &\leq& ||T_{\psi}(f-Q_\Omega f)||_2+  
||T_{\psi}(Q_\Omega f-Q_\Omega P_T f)||_2\\
 &\leq& \epsilon_{\Omega} ||T_\psi(f)||_2+ A_\psi ||f-P_Tf||_2\\
  &\leq& \epsilon_{\Omega} ||T_\psi(f)||_2+ A_\psi \epsilon_T ||f||_2.
\end{array}
$$
Otherwise, we have
$$
\begin{array}{lll}
||T_{\psi}(f)||_2 &\leq& ||T_{\psi}(f-Q_\Omega P_T f)||_2+  
||T_{\psi}(Q_\Omega P_T f)||_2\\
&\leq& \epsilon_{\Omega} ||T_\psi(f)||_2+ A_\psi \epsilon_T ||f||_2+ \phi_{\Omega,T}(\psi )||f||_2.
\end{array}
$$
Therefore, 
$$
(1-\epsilon_{\Omega})||T_{\psi}(f)||_2 \leq[ A_\psi \epsilon_T + \phi_{\Omega,T}(\psi )]||f||_2.
$$
A a result,
$$
||T_{\psi}(f)||_2 \leq \dfrac{[ A_\psi \epsilon_T + \phi_{\Omega,T}(\psi )]}{(1-\epsilon_{\Omega})} ||f||_2. 
$$
\end{proof}

\begin{corollary}
Suppose that $f\in L^2(\mathbb{R}^m, \mathbb{R}_m)$, with $f\neq 0 $,  $f$ is $\epsilon_T$-concentrated on $T$, and $T_{\psi}$ is $\epsilon_{\Omega}$-concentrated on $\Omega$. Then, 
\begin{enumerate}
\item \begin{equation}\label{cor4.2}
    (1-\epsilon_{\Omega}-\epsilon_{T}) A_\psi \leq \phi_{\Omega,T}(\psi ).
\end{equation}
\item Whenever $Supp f\subseteq T$ and $Supp T_{\psi}\subseteq \Omega$,  we have 
\begin{equation}
A_\psi\leq \phi_{\Omega,T}(\psi ).
\end{equation}
\end{enumerate}
\end{corollary}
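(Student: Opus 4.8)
The plan is to obtain both assertions by feeding the Parseval--Plancherel equality of the preceding Lemma into the inequality supplied by the Donoho--Stark Theorem, so that the functional estimate collapses into a relation among the constants $A_\psi$, $\epsilon_T$, $\epsilon_\Omega$ and $\phi_{\Omega,T}(\psi)$. The entire content is algebraic once those two prior results are granted.

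For the first part, I would start from the conclusion of the Theorem,
$$
\|T_\psi(f)\|_2 \le \frac{A_\psi \epsilon_T + \phi_{\Omega,T}(\psi)}{1-\epsilon_\Omega}\,\|f\|_2,
$$
and replace the left-hand side using the Parseval--Plancherel identity $\|T_\psi(f)\|_2 = A_\psi \|f\|_2$. This turns the estimate into
$$
A_\psi \|f\|_2 \le \frac{A_\psi \epsilon_T + \phi_{\Omega,T}(\psi)}{1-\epsilon_\Omega}\,\|f\|_2.
$$
Since the hypothesis $f\neq 0$ guarantees $\|f\|_2>0$, I may cancel the common factor $\|f\|_2$ and multiply through by $1-\epsilon_\Omega>0$, obtaining $A_\psi(1-\epsilon_\Omega)\le A_\psi\epsilon_T+\phi_{\Omega,T}(\psi)$. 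Transposing the term $A_\psi\epsilon_T$ then yields exactly $(1-\epsilon_\Omega-\epsilon_T)A_\psi\le\phi_{\Omega,T}(\psi)$, which is the first claim.

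The second part I would deduce as a degenerate case of the first. The hypotheses $\mathrm{Supp}\,f\subseteq T$ and $\mathrm{Supp}\,T_\psi(f)\subseteq\Omega$ say that $f$ and its wavelet transform vanish identically outside $T$ and $\Omega$ respectively; hence the $\epsilon$-concentration inequalities of the Definition hold with the sharp values $\epsilon_T=0$ and $\epsilon_\Omega=0$. Substituting these into the bound just proved collapses it directly to $A_\psi\le\phi_{\Omega,T}(\psi)$.

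There is no genuine analytic difficulty to overcome here, so the ``hard part'' is really just bookkeeping. The one point that must not be skipped is the role of the assumption $f\neq 0$: it is precisely what licenses the cancellation of $\|f\|_2$, and without it the inequality would be vacuous. I would also take care to verify that the norm appearing in the Lemma and the norm appearing in the Theorem refer to the same space $L^2(\mathbb{R}^m,\mathbb{R}_m,dV(\underline{x}))$, so that the substitution of the exact identity into the estimate is legitimate; granting the stated consistency of these norms, the two displayed inequalities follow immediately.
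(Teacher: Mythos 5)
Your proposal is correct and follows exactly the route the paper takes: combine the Parseval--Plancherel identity $\|T_\psi(f)\|_2 = A_\psi\|f\|_2$ with the Donoho--Stark estimate, cancel $\|f\|_2>0$, and rearrange to get \eqref{cor4.2}, then obtain the second assertion by taking $\epsilon_T=\epsilon_\Omega=0$ under the support hypotheses. The paper states this only as a two-line sketch; you have merely supplied the algebraic bookkeeping it leaves implicit, so there is nothing to correct.
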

\begin{proof}
\begin{enumerate}
\item The first assertion is follows from the Parseval's idendity (\ref{Parseva's Identity}) and the Donoho-Stark’s uncertainty principle type (\ref{Donoho-Stark’s uncertainty principle type}).
\item The second assertion is a natural consequence of (\ref{cor4.2}) by choosing $\epsilon_{T}=\epsilon_{\Omega}=0$.
\end{enumerate}
\end{proof}
\section{Conclusion}
In this paper, the Donoho-Stark's uncertainty principle associated to wavelet transforms in the framework of Clifford analysis has been established.
\section*{Data Availability Statement}
My manuscript has no associate data.

\end{document}